\begin{document}

\title{Generalized hyperharmonic number sums with reciprocal binomial coefficients}
\author{
Rusen Li
%\thanks{
%}
\\
%\small Department of Mathematical Sciences, School of Science\\
%\small Zhejiang Sci-Tech University\\
%\small Hangzhou 310018 China\\
\small School of Mathematics\\
\small Shandong University\\
\small Jinan 250100 China\\
%\small \texttt{komatsu@zstu.edu.cn}\\
\small \texttt{limanjiashe@163.com}
}

\date{
\small 2020 MR Subject Classifications: 05A10, 11B65, 11B68, 11B83, 11M06
%\small Submitted: November 15, 2019;  Accepted: December 25, 2019.\\
%\small MR Subject Classifications: Primary 11B65; Secondary 05A19
}

\maketitle

\def\stf#1#2{\left[#1\atop#2\right]}
\def\sts#1#2{\left\{#1\atop#2\right\}}
\def\e{\mathfrak e}
\def\f{\mathfrak f}

\newtheorem{theorem}{Theorem}
\newtheorem{Prop}{Proposition}
\newtheorem{Cor}{Corollary}
\newtheorem{Lem}{Lemma}
\newtheorem{Example}{Example}
\newtheorem{Remark}{Remark}
\newtheorem{Definition}{Definition}
\newtheorem{Conjecture}{Conjecture}
\newtheorem{Problem}{Problem}

\begin{abstract}
In this paper, we mainly show that generalized hyperharmonic number sums with reciprocal binomial coefficients can be expressed in terms of classical (alternating) Euler sums, zeta values and generalized (alternating) harmonic numbers.
\\
{\bf Keywords:} generalized hyperharmonic numbers, classical Euler sums, binomial coefficients, combinatorial approach, partial fraction approach
\end{abstract}

\section{Introduction and preliminaries}

Let $\mathbb Z$, $\mathbb N$, $\mathbb N_{0}$ and $\mathbb C$ denote the set of integers, positive integers, nonnegative integers and complex numbers, respectively. In the present paper, we mainly study the so-called generalized hyperharmonic numbers {\cite{Dil,Rusen,omur}} which are defined as
$$
H_n^{(p,r)}:=\sum_{j=1}^n H_{j}^{(p,r-1)} \quad (n,p,r \in \mathbb N),
$$
where $H_n^{(p,1)}=H_n^{(p)}=\sum_{j=1}^n 1/{j^p}$ are the well studied classical harmonic numbers. Note that, $H_n^{(1,r)}=h_n^{(r)}$ are the classical hyperharmonic numbers introduced by Conway and Guy {\cite{Conway}}. To see combinatorial interpretations of these hyperharmonic numbers and their connections with Stirling numbers, please find Benjamin et al's interesting paper {\cite{Benjamin}}. For convenience, we recall the generalized alternating harmonic numbers which are defined as
$$
\overline{H}_n^{(m)}:=\sum_{j=1}^n \frac{(-1)^{j-1}}{j^m} \quad (n, m \in \mathbb N).
$$

The harmonic numbers and their generalizations has caused many mathematicians' interest (see \cite{Doelder,Dil,Dil1,Flajolet,Kamano,Rusen,LiRusen,Matsuoka,mezo,Sofo,Sofo1,Sofo2} and references therein), since they play an essential role in number theory, combinatorics, analysis of algorithms and many other areas (see e.g. \cite{Knuth}). One of the most famous result that obtained by Euler \cite{Flajolet} is the following identity
$$
2\sum_{n=1}^\infty \frac{H_n}{n^{m}}=(m+2)\zeta(m+1)-\sum_{n=1}^{m-2}\zeta(m-n)\zeta(n+1), \quad m=2,3,\cdots.
$$
It is interesting that the Riemann zeta functions $\zeta(s):=\sum_{n=1}^\infty n^{-s}$ appear in such expressions.
According to the recording of Ramanujan's Notebooks \cite[p.253]{Berndt}, Euler considered this type of infinite series containing harmonic numbers $H_n$ in response to a letter from Goldbach in $1742$.

For convenience, we recall the definition of the well-known Hurwitz zeta function:
$$
\zeta(s,a):=\sum_{n=0}^\infty \frac{1}{(n+a)^{s}} \quad (s\in \mathbb C, \mathfrak Re(s)>1, a>0).
$$
Note that $\mathfrak Re(s)$ denotes the real part of the complex number $s$. When $a=1$, $\zeta(s,1)$ is the famous Riemann zeta function. The alternating zeta function $\overline{\zeta}(s)$ is defined by
$$
\overline{\zeta}(s):=\sum_{n=1}^\infty \frac{(-1)^{n-1}}{n^{s}}=(1-2^{1-s})\zeta(s)\quad (s\in \mathbb C, \mathfrak Re(s)\geq 1),
$$
with $\overline{\zeta}(1)=\log 2$.

From Euler's time on, infinite series containing harmonic numbers or their generalizations have been called Euler sums. It is a difficult task to give explicit evaluation for general Euler sums. Facilitated by numerical computations using an algorithm, Bailey, Borwein and Girgensohn \cite{Bailey} determined, with high confidence, whether or not a particular numerical value involving the generalized harmonic numbers $H_n^{(m)}$ could be expressed as a rational linear combination of several given constants.

Flajolet and Salvy \cite{Flajolet} developed the contour integral representation approach (the most powerful method in the corresponding area as far as the author knows, although restricted to parity principle) to the evaluation of Euler sums involving the classical (alternating) harmonic numbers. Note that, the contour integral representation approach can not only evaluate Euler sums, but also evaluate some infinite series involving hyperbolic functions.

Euler sums of hyperharmonic numbers had also attracted many mathematicians' attention. For instance, Mez\H o and Dil \cite{mezo} considered the Euler sums of type
$$
\sum_{n=1}^\infty \frac{h_n^{(r)}}{n^{m}} \quad (m\ge r+1, m \in \mathbb N),
$$
and showed that it could be reduced to infinite series involving the Hurwitz zeta function values. Later Dil and Boyadzhiev \cite{Dil1} extended this result to infinite series involving multiple sums of the Hurwitz zeta function values.

As a natural generalization, Dil, Mez\H o and Cenkci \cite{Dil} considered Euler sums of generalized hyperharmonic numbers of the form
$$
\zeta_{H^{(p,r)}}(m):= \sum_{n=1}^\infty \frac{H_n^{(p,r)}}{n^{m}}.
$$
They proved that for positive integers $p, r$ and $m$ with $m>r$, $\zeta_{H^{(p,r)}}(m)$ could be reduced to infinite series of multiple sums of the Hurwitz zeta function values. For $r=1, 2, 3$, $\zeta_{H^{(p,r)}}(m)$ were also written explicitly in terms of (multiple) zeta values. Although these results were interesting, Dil et al didn't give general formula for explicit evaluations of Euler sums of generalized hyperharmonic numbers. Fortunately, the author \cite{Rusen} found a combinatorial approach and proved that $\zeta_{H^{(p,r)}}(m)$ could be expressed as linear combinations of classical Euler sums. From Flajolet and Salvy's paper \cite{Flajolet}, we knew that the linear Euler sums
$$
\sum_{n=1}^\infty \frac{H_n}{n^{m}}\quad (m\ge 2, m \in \mathbb N) \quad\hbox{and}\quad
\sum_{n=1}^\infty \frac{H_n^{(p)}}{n^{q}}\quad (p, q \in \mathbb N \quad \hbox{with $p+q$ odd})
$$
could be reduced to zeta values. Thus for small values of $p, r$ and $m$, we can determine the exact values of $\zeta_{H^{(p,r)}}(m)$.

Motivated by Flajolet-Salvy's paper \cite{Flajolet} and Dil-Mez\H o-Cenkci's paper \cite{Dil}, the author \cite{LiRusen} also introduced the notion of the generalized alternating hyperharmonic numbers
$$
H_n^{(p,r,1)}:=\sum_{k=1}^n (-1)^{k-1} H_{k}^{(p,r-1,1)}\quad (H_n^{(p,1,1)}=H_n^{(p)}),
$$
and proved that Euler sums of the generalized alternating hyperharmonic numbers $H_n^{(p,r,1)}$ could be expressed in terms of linear combinations of classical (alternating) Euler sums.

If we regard $\sum_{n=1}^\infty h_n^{(r)}/{n^{s}}$ as a complex function in variable $s$, there are some more progresses toward this direction. For instance, Matsuoka \cite{Matsuoka} proved that $\sum_{n=1}^\infty h_n^{(1)}/{n^{s}}$ admits a meromorphic continuation to the entire complex plane. Kamano \cite{Kamano} expressed the complex variable function $\sum_{n=1}^\infty h_n^{(r)}/{n^{s}}$ in terms of the Riemann zeta functions, and showed that it could be meromorphically continued to the entire complex plane. In addition, the residue at each pole was also given.

There are some more interesting combinatorial properties about the generalized hyperharmonic numbers. For instance, \" Om\" ur and Koparal \cite{omur} defined two $n\times n$ matrices $A_n$ and $B_n$ with $a_{i,j}=H_i^{(j,r)}$ and $b_{i,j}=H_i^{(p,j)}$, respectively, and gave some interesting factorizations and determinant properties of the matrices $A_n$ and $B_n$.

On the contrary, Euler sums of generalized harmonic numbers with reciprocal binomial coefficients had been studied by Sofo. In $2011$, Sofo \cite{Sofo} proved that generalized harmonic number sums with reciprocal binomial coefficients of types
$\sum_{n=1}^\infty\frac{H_n^{(s)}}{\binom{n+k}{k}}$ and $\sum_{n=1}^\infty\frac{H_n^{(s)}}{n\binom{n+k}{k}}$
could be written in terms of zeta values and harmonic numbers. In $2015$, Sofo \cite{Sofo1} developed closed form representations of alternating quadratic harmonic numbers and reciprocal binomial coefficients, including integral representations, of the form
$$
\sum_{n=1}^\infty\frac{(-1)^{n+1}(H_n)^{2}}{n^{p}\binom{n+k}{k}}
$$
for $p=0$ and $1$.
In $2016$, Sofo \cite{Sofo2} developed identities, closed form representations of alternating harmonic numbers of order two and reciprocal binomial coefficients of the form:
$$
\sum_{n=1}^\infty\frac{(-1)^{n+1}H_n^{(2)}}{n^{p}\binom{n+k}{k}}
$$
for $p=0$ and $1$.

In the present paper, we mainly show that generalized hyperharmonic number sums with reciprocal binomial coefficients of types
$$
\sum_{n=1}^\infty\frac{H_n^{(p,s)}}{n^{m}\binom{n+k}{k}}\,,\quad
\sum_{n=1}^\infty\frac{(-1)^{n+1} H_n^{(p,s)}}{n^{m}\binom{n+k}{k}}\,,\\
$$
and
$$
\sum_{n=1}^\infty\frac{H_n^{(p_{1},s_{1})}H_n^{(p_{2},s_{2})}}{n^{m}\binom{n+k}{k}}\,,\quad
\sum_{n=1}^\infty\frac{(-1)^{n+1} H_n^{(p_{1},s_{1})}H_n^{(p_{2},s_{2})}}{n^{m}\binom{n+k}{k}}
$$
can be expressed in terms of linear combinations of classical (alternating) Euler sums, zeta values and generalized (alternating) harmonic numbers. Some illustrative examples are also given. Further more, We give explicit evaluations for some interesting integrals and develop some combinatorial expressions for harmonic numbers in terms of binomial coefficients.

\section{Generalized hyperharmonic number sums}

In this section, we develop closed form representations for generalized hyperharmonic number sums with reciprocal binomial coefficients of types
$$
\sum_{n=1}^\infty\frac{H_n^{(p,s)}}{n^{m}\binom{n+k}{k}}\,\quad\hbox{and}\quad
\sum_{n=1}^\infty\frac{(-1)^{n+1} H_n^{(p,s)}}{n^{m}\binom{n+k}{k}}\,.
$$

Before going further, we introduce some notations and lemmata.

Following Flajolet-Salvy's paper \cite{Flajolet}, we write four types of classical linear (alternating) Euler sums as
\begin{align*}
S_{p,q}^{+,+}:=\sum_{n=1}^\infty \frac{H_n^{(p)}}{{n}^{q}},
\quad S_{p,q}^{+,-}:=\sum_{n=1}^\infty (-1)^{n-1}\frac{H_n^{(p)}}{{n}^{q}},\\
S_{p,q}^{-,+}:=\sum_{n=1}^\infty \frac{\bar{H}_n^{(p)}}{{n}^{q}},
\quad S_{p,q}^{-,-}:=\sum_{n=1}^\infty (-1)^{n-1}\frac{\bar{H}_n^{(p)}}{{n}^{q}}.
\end{align*}

We now recall Faulhaber's formula on sums of powers. It is well known that the sum of powers of consecutive intergers $1^k+2^k+\cdots+n^k$ can be explicitly expressed in terms of Bernoulli numbers or Bernoulli polynomials. Faulhaber's formula can be written as
\begin{align}
\sum_{\ell=1}^{n}\ell^{k}
&=\frac{1}{k+1}\sum_{j=0}^k \binom{k+1}{j}B_j^{+} n^{k+1-j}\label{ber}\\
&=\frac{1}{k+1}(B_{k+1}(n+1)-B_{k+1}(1))\quad\hbox{\cite{CFZ}}\,,
\label{ber1}
\end{align}
where Bernoulli numbers $B_n^{+}$ are determined by the recurrence formula
$$
\sum_{j=0}^k\binom{k+1}{j}B_j^{+}=k+1\quad (k\ge 0)
$$
or by the generating function
\begin{align}\label{defbernou}
\frac{t}{1-e^{-t}}=\sum_{n=0}^\infty B_n^{+}\frac{t^n}{n!}\,,
\end{align}
and  Bernoulli polynomials $B_n(x)$ are defined by the following generating function
$$
\frac{te^{xt}}{e^{t}-1}=\sum_{n=0}^\infty B_n(x)\frac{t^n}{n!}\,.
$$

\begin{Definition}\label{def1}
For $p\in \mathbb Z$ and $m, r, t \in \mathbb N$, define the quantities $S(p,m,t,r,0)$ and $S(p,m,1,r,1)$ as
\begin{align*}
&S(p,m,t,r,0):=\sum_{n=1}^\infty\frac{H_n^{(p)}}{n^{m}(n+r)^{t}}\,,\\
&S(p,m,t,r,1):=\sum_{n=1}^\infty\frac{(-1)^{n+1} H_n^{(p)}}{n^{m}(n+r)^{t}}\,.
\end{align*}
When $p \geq 0$, $H_n^{(-p)}$ is understood to be the sum $1^p+2^p+\cdots+n^p$.
\end{Definition}

\begin{Lem}[{\cite[Lemma 1.2]{Sofo}}]\label{lem1}
Let $s$ be a positive integer and $a>0$, then
\begin{align*}
\sum_{n=1}^\infty\frac{a H_n^{(s)}}{n(n+a)}
&=\zeta(s+1)+\sum_{j=1}^{a-1}\frac{(-1)^{s+1}H_j}{j^s}+\sum_{i=2}^s(-1)^{s-i}H_{a-1}^{(s-i+1)}\zeta(i)\,.
\end{align*}
\end{Lem}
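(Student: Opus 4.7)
The natural starting point is the partial-fraction identity
$$
\frac{a}{n(n+a)} \;=\; \frac{1}{n}-\frac{1}{n+a} \;=\; \int_0^1 x^{n-1}(1-x^a)\,dx,
$$
which will convert the series into a single integral. My plan is to swap summation and integration (legitimate for $s\ge 2$ by absolute convergence, and handled for $s=1$ by truncating the integral at $1-\varepsilon$), then invoke the standard generating function $\sum_{n=1}^\infty H_n^{(s)}x^n = \mathrm{Li}_s(x)/(1-x)$, where $\mathrm{Li}_s(x)=\sum_{n\ge 1}x^n/n^s$. This recasts the sum as
$$
\sum_{n=1}^\infty\frac{aH_n^{(s)}}{n(n+a)}
\;=\; \int_0^1 \frac{(1-x^a)\,\mathrm{Li}_s(x)}{x(1-x)}\,dx
\;=\; \sum_{k=0}^{a-1}\int_0^1 x^{k-1}\mathrm{Li}_s(x)\,dx,
$$
after using the geometric factorization $(1-x^a)/(1-x)=1+x+\cdots+x^{a-1}$.

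The $k=0$ summand is immediate, giving $\int_0^1 \mathrm{Li}_s(x)/x\,dx = \mathrm{Li}_{s+1}(1) = \zeta(s+1)$, which accounts for the first piece of the claimed formula. For $k\ge 1$, I would compute $I_k:=\int_0^1 x^{k-1}\mathrm{Li}_s(x)\,dx$ by iterated integration by parts: with $u=\mathrm{Li}_s(x)$ and $dv=x^{k-1}\,dx$ one uses $d(\mathrm{Li}_s(x)) = \mathrm{Li}_{s-1}(x)\,dx/x$ to drop the polylog index by one and produce a boundary term $\zeta(s)/k$. Iterating $s-1$ times peels off the terms $(-1)^{i-1}\zeta(s-i+1)/k^i$, and the process terminates at $\int_0^1 x^{k-1}\mathrm{Li}_1(x)\,dx=-\int_0^1 x^{k-1}\log(1-x)\,dx=H_k/k$, which is evaluated via $-\log(1-x)=\sum_n x^n/n$ followed by the same telescoping partial fraction as above. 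The net outcome is the closed form
$$
I_k \;=\; \sum_{i=2}^{s}\frac{(-1)^{s-i}\zeta(i)}{k^{s-i+1}} + (-1)^{s-1}\frac{H_k}{k^s}.
$$

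Summing $I_k$ over $k=1,\ldots,a-1$ and interchanging the two finite sums yields
$$
\sum_{k=1}^{a-1}I_k \;=\; \sum_{i=2}^{s}(-1)^{s-i}\zeta(i)\,H_{a-1}^{(s-i+1)} \;+\; (-1)^{s+1}\sum_{j=1}^{a-1}\frac{H_j}{j^s},
$$
which combined with the $k=0$ term reproduces the right-hand side of the lemma exactly. The main obstacle I expect is the analytic legitimacy of the interchange when $s=1$, since $\mathrm{Li}_1(x)$ has a logarithmic singularity at $x=1$; this is defused by the vanishing factor $(1-x^a)/(1-x)$ in the integrand (which is bounded on $[0,1]$) and, at the level of integration by parts, by the fact that boundary terms of the form $x^k\mathrm{Li}_s(x)\big|_0^1$ remain finite because $k\ge 1$. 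Everything else is bookkeeping: matching the index shift $i\leftrightarrow s-i+1$ and recognizing $(-1)^{s-1}=(-1)^{s+1}$ to align signs with the statement.
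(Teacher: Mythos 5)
Your proposal is correct, and every step checks out: the representation $\frac{a}{n(n+a)}=\int_0^1 x^{n-1}(1-x^a)\,\mathrm{d}x$, the generating function $\sum_{n\ge1}H_n^{(s)}x^n=\mathrm{Li}_s(x)/(1-x)$, and the closed form $I_k=\sum_{i=2}^{s}(-1)^{s-i}\zeta(i)k^{-(s-i+1)}+(-1)^{s-1}H_k/k^{s}$, which one can confirm independently of your integration by parts by noting $I_k=\sum_{n\ge1}\frac{1}{n^{s}(n+k)}$ and applying the same partial fraction expansion $\frac{1}{n^{s}(n+k)}=\sum_{i=1}^{s}\frac{(-1)^{s-i}}{k^{s-i+1}n^{i}}+\frac{(-1)^{s}}{k^{s}(n+k)}$ that the paper uses in Lemmas \ref{lem2}--\ref{lem4}. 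Be aware, though, that the paper does not prove this lemma at all: it is imported verbatim from \cite[Lemma 1.2]{Sofo}, so there is no internal proof to compare against, and your argument usefully makes this step self-contained. Three remarks. First, your case split for the interchange of sum and integral is unnecessary: all terms $H_n^{(s)}x^{n-1}(1-x^a)$ are nonnegative on $[0,1]$, so Tonelli's theorem justifies the swap uniformly in $s\ge1$, the finiteness of the output confirming convergence even in the logarithmically singular case $s=1$. Second, your proof (like the factorization $(1-x^a)/(1-x)=1+x+\cdots+x^{a-1}$ and the finite sums up to $a-1$ in the statement itself) requires $a$ to be a positive integer, whereas the lemma is stated for real $a>0$; for non-integer $a$ the correct identity involves digamma values in place of harmonic numbers. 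This is harmless here, since the paper only invokes the lemma with $a=r\in\mathbb N$, but you should state the integrality hypothesis explicitly. Third, a purely discrete route in the style the paper uses elsewhere is available: Abel summation (Lemma \ref{lem6}) with $f_n=H_n^{(s)}$ and $g_n=\frac{1}{n+1}+\cdots+\frac{1}{n+a}$ yields exactly your intermediate identity $\sum_{n\ge1}\frac{aH_n^{(s)}}{n(n+a)}=\zeta(s+1)+\sum_{k=1}^{a-1}\sum_{n\ge1}\frac{1}{n^{s}(n+k)}$, mirroring the paper's proof of the quadratic analogue in Lemma \ref{lem7}; your polylogarithm route buys a cleaner evaluation of the inner sums, while the discrete route avoids analytic interchange questions entirely.
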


\begin{Lem}\label{lem2}
Let $p, m, r \in \mathbb N$, then we have
\begin{align*}
S(p,m,1,r,0)
&=\sum_{i=2}^{m}\frac{(-1)^{m-i}}{r^{m-i+1}}S_{p,i}^{+,+}+\frac{(-1)^{m-1}}{r^{m}}\zeta(p+1)\\
&\quad +\frac{(-1)^{m-1}}{r^{m}}\left(\sum_{j=1}^{r-1}\frac{(-1)^{p+1}H_j}{j^p}
+\sum_{\ell=2}^{p}(-1)^{p-\ell}H_{r-1}^{(p-\ell+1)}\zeta(\ell)\right)\,.
\end{align*}
Let $m, r \in \mathbb N$, $p \in \mathbb N_{0}$ and $m \geq p+2$, then we have
\begin{align*}
&S(-p,m,1,r,0)\\
&=\frac{1}{p+1}\sum_{\ell=0}^{p} \binom{p+1}{\ell}B_{\ell}^{+}
\left(\sum_{i=2}^{m-p-1+\ell}\frac{(-1)^{m-p-1+\ell-i}}{r^{m-p+\ell-i}}\zeta(i)
+\frac{(-1)^{m-p-2+\ell}}{r^{m-p-1+\ell}} H_{r}\right)\,.
\end{align*}
\end{Lem}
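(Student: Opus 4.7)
The plan is to attack both parts by the standard partial fraction decomposition
\[
\frac{1}{n^{m}(n+r)}=\sum_{i=1}^{m}\frac{(-1)^{m-i}}{r^{m-i+1}\,n^{i}}+\frac{(-1)^{m}}{r^{m}(n+r)},
\]
which one derives by iterating the identity $\frac{1}{n^{m}(n+r)}=\frac{1}{r\,n^{m}}-\frac{1}{r\,n^{m-1}(n+r)}$. Multiplying through by $H_{n}^{(p)}$ and summing over $n\ge 1$, the pieces with $i\ge 2$ give the linear Euler sums $S_{p,i}^{+,+}$, while the $i=1$ contribution is divergent on its own and must be paired with the $1/(n+r)$ tail.

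For the first identity, after that pairing the remaining block reduces to
\[
\frac{(-1)^{m-1}}{r^{m}}\sum_{n=1}^{\infty}H_{n}^{(p)}\Bigl(\frac{1}{n}-\frac{1}{n+r}\Bigr)=\frac{(-1)^{m-1}}{r^{m}}\sum_{n=1}^{\infty}\frac{r\,H_{n}^{(p)}}{n(n+r)}.
\]
Lemma~\ref{lem1} (applied with $a=r$, $s=p$) now evaluates this series explicitly and yields precisely the $\zeta(p+1)$ term together with the inner sums $\sum_{j=1}^{r-1}(-1)^{p+1}H_{j}/j^{p}$ and $\sum_{\ell=2}^{p}(-1)^{p-\ell}H_{r-1}^{(p-\ell+1)}\zeta(\ell)$ stated in the lemma.

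For the second identity I first replace $H_{n}^{(-p)}$ by the Faulhaber expansion \eqref{ber}, which writes $H_{n}^{(-p)}$ as a finite linear combination of monomials $n^{p+1-\ell}$ with coefficients $\binom{p+1}{\ell}B_{\ell}^{+}/(p+1)$. Interchanging the summations reduces the problem to evaluating
\[
T_{M}:=\sum_{n=1}^{\infty}\frac{1}{n^{M}(n+r)},\qquad M:=m-p-1+\ell,
\]
and the assumption $m\ge p+2$ ensures $M\ge 1$, so every $T_{M}$ converges. Applying the same partial fraction decomposition to $T_{M}$ gives zeta values for $i\ge 2$, while the $i=1$ divergent term combines with the $1/(n+r)$ term and telescopes as $\sum_{n\ge 1}(\frac{1}{n}-\frac{1}{n+r})=H_{r}$. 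Substituting back $M=m-p-1+\ell$ recovers the claimed formula.

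The main obstacle, more bookkeeping than conceptual, will be to align the index ranges and signs correctly when the divergent $i=1$ tail is merged with the $1/(n+r)$ contribution, and to check that the reindexing $M\mapsto m-p-1+\ell$ produces exactly the exponents of $r$ and the signs $(-1)^{m-p-1+\ell-i}$ and $(-1)^{m-p-2+\ell}$ as written. Convergence of the first identity is automatic once the $i=1$ term is paired off (all surviving series have $i\ge 2$), and convergence in the second identity is exactly what the hypothesis $m\ge p+2$ enforces.
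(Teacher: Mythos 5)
Your proposal is correct and takes essentially the same route as the paper: the same partial fraction decomposition (the paper simply writes it with the divergent $i=1$ term already merged into a single $\frac{1}{n(n+r)}$ block, so the pairing you describe is built in), followed by Lemma~\ref{lem1} with $a=r$, $s=p$ for the first identity. For the second identity you likewise match the paper's argument: Faulhaber's expansion \eqref{ber}, interchange of summation, and the telescoping evaluation $\sum_{n\ge 1}\bigl(\tfrac{1}{n}-\tfrac{1}{n+r}\bigr)=H_{r}$, with the hypothesis $m\ge p+2$ guaranteeing convergence exactly as you note.
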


\begin{proof}
When $p, m, r \in \mathbb N$, we can obtain that
\begin{align*}
\sum_{n=1}^\infty\frac{H_n^{(p)}}{n^{m}(n+r)}
&=\sum_{n=1}^\infty H_n^{(p)}\left(\sum_{i=2}^{m}\frac{(-1)^{m-i}}{r^{m-i+1}n^{i}}
+\frac{(-1)^{m-1}}{r^{m-1}n(n+r)}\right)\\
&=\sum_{i=2}^{m}\frac{(-1)^{m-i}}{r^{m-i+1}}\sum_{n=1}^\infty \frac{H_n^{(p)}}{n^{i}}
+\frac{(-1)^{m-1}}{r^{m-1}}\sum_{n=1}^\infty\frac{H_n^{(p)}}{n(n+r)}\,.
\end{align*}
With the help of Lemma {\ref{lem1}}, we get the desired result.

When $m, r \in \mathbb N$, $p \in \mathbb N_{0}$ and $m \geq p+2$, we have
\begin{align*}
\sum_{n=1}^\infty\frac{H_n^{(-p)}}{n^{m}(n+r)}
&=\sum_{n=1}^\infty\frac{\sum_{\ell=1}^{n}\ell^{p}}{n^{m}(n+r)}\\
&=\sum_{n=1}^\infty
\frac{\frac{1}{p+1}\sum_{\ell=0}^{p} \binom{p+1}{\ell}B_{\ell}^{+}n^{p+1-\ell}}{n^{m}(n+r)}\\
&=\frac{1}{p+1}\sum_{\ell=0}^{p} \binom{p+1}{\ell}B_{\ell}^{+} \sum_{n=1}^\infty \frac{1}{n^{m-p-1+\ell}(n+r)}\,.
\end{align*}
With the help of partial fraction expansion
\begin{align*}
\frac{1}{n^{t}(n+r)}
=\sum_{i=2}^{t}\frac{(-1)^{t-i}}{r^{t-i+1}}\cdot\frac{1}{n^{i}}
+\frac{(-1)^{t-1}}{r^{t-1}}\cdot\frac{1}{n(n+r)},
\end{align*}
we get the desired result.
\end{proof}

\begin{Lem}\label{lem3}
Let $p, r \in \mathbb N$, defining
\begin{align*}
S(p,r,1):=\sum_{n=1}^\infty \frac{(-1)^{n+1}H_n^{(p)}}{n+r}\,,
\end{align*}
then we have
\begin{align*}
&S(p,r,1)\\
&=(-1)^{r}S_{p,1}^{+,-}+(-1)^{r-1}\overline{\zeta}(p+1)
+\sum_{j=1}^{p}(-1)^{p-j+r}\overline{\zeta}(j)\overline{H}_{r-1}^{(p-j+1)}\\
&\quad +(-1)^{p+r-1}\overline{\zeta}(1)H_{r-1}^{(p)}
+(-1)^{p+r}\sum_{n=1}^{r-1}\frac{\overline{H}_{n}}{n^{p}}\,.
\end{align*}
\end{Lem}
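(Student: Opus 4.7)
The plan is to prove the identity by induction on $r$. For the base case $r=1$, the claimed right-hand side collapses to $-S_{p,1}^{+,-}+\overline{\zeta}(p+1)$ under the conventions $\overline{H}_0^{(m)}=H_0^{(p)}=0$ and the empty sum $=0$. To verify this on the left, I would use $H_n^{(p)}=H_{n+1}^{(p)}-1/(n+1)^p$, reindex $m=n+1$, and then split off the $m=1$ contributions of $S_{p,1}^{+,-}$ and $\overline{\zeta}(p+1)$ to match.

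For the inductive step, the key is the one-term recursion
\begin{align*}
S(p,r+1,1)+S(p,r,1)=T(p,r),\qquad T(p,r):=\sum_{m=1}^\infty\frac{(-1)^{m+1}}{m^p(m+r)},
\end{align*}
obtained by the same reindexing $m=n+1$ applied to $S(p,r+1,1)$ together with the split $H_{m-1}^{(p)}=H_m^{(p)}-1/m^p$. To evaluate $T(p,r)$ I would invoke the partial-fraction expansion
\begin{align*}
\frac{1}{m^p(m+r)}=\frac{(-1)^p}{r^p(m+r)}+\sum_{j=1}^p\frac{(-1)^{p-j}}{r^{p-j+1}m^j}
\end{align*}
together with the shift identity $\sum_{m\geq 1}(-1)^{m+1}/(m+r)=(-1)^r(\overline{\zeta}(1)-\overline{H}_r)$, yielding
\begin{align*}
T(p,r)=\frac{(-1)^{p+r}}{r^p}\bigl(\overline{\zeta}(1)-\overline{H}_r\bigr)+\sum_{j=1}^p\frac{(-1)^{p-j}}{r^{p-j+1}}\overline{\zeta}(j).
\end{align*}

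Substituting the induction hypothesis and this expression for $T(p,r)$ into $S(p,r+1,1)=-S(p,r,1)+T(p,r)$, each piece transforms as predicted: the leading Euler-sum and alternating-zeta coefficients pick up the sign flip $(-1)^r\mapsto(-1)^{r+1}$, while the new contributions from $T(p,r)$ extend the harmonic-number expressions via the defining recurrences $H_r^{(p)}=H_{r-1}^{(p)}+1/r^p$ and $\overline{H}_r^{(m)}=\overline{H}_{r-1}^{(m)}+(-1)^{r-1}/r^m$. The main obstacle will be the sign/index bookkeeping: one must check that $(-1)^{p+r}\overline{H}_r/r^p$ from $T(p,r)$ correctly augments the tail $(-1)^{p+r}\sum_{n=1}^{r-1}\overline{H}_n/n^p$ into its $r+1$ version, and that the partial-fraction contributions accumulate into $\overline{H}_r^{(p-j+1)}$ with matching powers of $-1$. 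A clean alternative would be to skip induction and solve the recursion directly as $S(p,r,1)=(-1)^{r-1}S(p,1,1)+\sum_{k=1}^{r-1}(-1)^{r-1-k}T(p,k)$, then collapse the resulting double sum over $k$ and $j$ using $\sum_{k=1}^{r-1}(-1)^{-k}/k^m=-\overline{H}_{r-1}^{(m)}$, which directly produces the four remaining terms of the stated closed form.
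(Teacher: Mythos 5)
Your proposal is correct and is essentially the paper's own argument: the paper derives exactly your recursion $S(p,r,1)=-S(p,r-1,1)+T(p,r-1)$ by the same reindexing, evaluates $T$ by the same partial-fraction expansion and shift identity, solves the recursion by direct unrolling (your stated ``clean alternative''), and closes with the same base evaluation $S(p,1,1)=-S_{p,1}^{+,-}+\overline{\zeta}(p+1)$. Packaging the unrolling as an induction on $r$ is only a cosmetic difference, and your sign/index bookkeeping does check out.
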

\begin{proof}
By a change of counter, we have
\begin{align*}
&\quad S(p,r,1)\\
&=\sum_{n=1}^\infty \frac{(-1)^{n+1}H_n^{(p)}}{n+r}\\
&=\sum_{n=1}^\infty \frac{(-1)^{n}H_n^{(p)}}{n+r-1}-\sum_{n=1}^\infty \frac{(-1)^{n}}{n^{p}(n+r-1)}\\
&=-S(p,r-1,1)+\sum_{n=1}^\infty (-1)^{n+1}\left(\sum_{j=1}^{p}\frac{(-1)^{p-j}}{(r-1)^{p-j+1}}\cdot\frac{1}{n^{j}}
+\frac{(-1)^{p}}{(r-1)^{p}}\cdot\frac{1}{n+r-1}\right)\\
&=-S(p,r-1,1)+\sum_{j=1}^{p}\frac{(-1)^{p-j}}{(r-1)^{p-j+1}}\overline{\zeta}(j)
+\frac{(-1)^{p+r-1}}{(r-1)^{p}}(\overline{\zeta}(1)-\overline{H}_{r-1})\\
&=(-1)^{r-1}S(p,1,1)+\sum_{j=1}^{p}\overline{\zeta}(j)\sum_{n=1}^{r-1}\frac{(-1)^{p-j+r-1-n}}{n^{p-j+1}}\\
&\quad \quad +(-1)^{p+r-1}\overline{\zeta}(1)\sum_{n=1}^{r-1}\frac{1}{n^{p}}
+(-1)^{p+r}\sum_{n=1}^{r-1}\frac{\overline{H}_{n}}{n^{p}}\\
&=(-1)^{r-1}S(p,1,1)+\sum_{j=1}^{p}\overline{\zeta}(j)(-1)^{p-j+r}\overline{H}_{r-1}^{(p-j+1)}\\
&\quad \quad +(-1)^{p+r-1}\overline{\zeta}(1)H_{r-1}^{(p)}+(-1)^{p+r}\sum_{n=1}^{r-1}\frac{\overline{H}_{n}}{n^{p}}\,.
\end{align*}
Since
\begin{align*}
S(p,1,1)=\sum_{n=1}^\infty \frac{(-1)^{n+1}H_n^{(p)}}{n+1}=-S_{p,1}^{+,-}+\overline{\zeta}(p+1)\,,
\end{align*}
we get the desired result.
\end{proof}
Note that, $S(1,r,1)$ and $S(2,r,1)$ have already been obtained by Sofo (see \cite{Sofo1,Sofo2}).

\begin{Lem}\label{lem4}
Let $p, m, r \in \mathbb N$, then we have
\begin{align*}
&\quad S(p,m,1,r,1)\\
&=\sum_{i=1}^{m}\frac{(-1)^{m-i}}{r^{m-i+1}}S_{p,i}^{+,-}+\frac{(-1)^{m+r}}{r^{m}}S_{p,1}^{+,-}
+\frac{(-1)^{m+r-1}}{r^{m}}\overline{\zeta}(p+1)\\
&\quad +\frac{(-1)^{m}}{r^{m}}\left(\sum_{j=1}^{p}(-1)^{p-j+r}\overline{\zeta}(j)\overline{H}_{r-1}^{(p-j+1)}
+(-1)^{p+r-1}\overline{\zeta}(1)H_{r-1}^{(p)}\right)\\
&\quad +\frac{(-1)^{m+p+r}}{r^{m}}\sum_{n=1}^{r-1}\frac{\overline{H}_{n}}{n^{p}}\,.
\end{align*}
Let $m, r \in \mathbb N$, $p \in \mathbb N_{0}$ and $m \geq p+1$, then we have
{\scriptsize
\begin{align*}
&S(-p,m,1,r,1)\\
&=\frac{1}{p+1}\sum_{\ell=0}^{p} \binom{p+1}{\ell}B_{\ell}^{+}
\left(\sum_{i=1}^{m-p-1+\ell}\frac{(-1)^{m-p-1+\ell-i}}{r^{m-p+\ell-i}}\zeta(i)
+\frac{(-1)^{m-p-1+\ell+r}}{r^{m-p-1+\ell}}(\overline{\zeta}(1)-\overline{H}_{r})\right)\,.
\end{align*}
}
\end{Lem}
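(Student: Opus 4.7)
The plan is to closely mirror the approach used in the proof of Lemma~\ref{lem2}, substituting the alternating analogues of the building blocks where appropriate. The main tool is a partial-fraction decomposition of $1/[n^{m}(n+r)]$, after which each resulting one-variable Euler-type series is either a classical alternating linear sum $S_{p,i}^{+,-}$, an alternating zeta value $\overline{\zeta}(i)$, or the auxiliary sum $S(p,r,1)$ evaluated in Lemma~\ref{lem3}.

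For the first assertion (with $p\in\mathbb{N}$), I would begin from the partial-fraction identity
\[
\frac{1}{n^{m}(n+r)}=\sum_{i=1}^{m}\frac{(-1)^{m-i}}{r^{m-i+1}}\cdot\frac{1}{n^{i}}+\frac{(-1)^{m}}{r^{m}}\cdot\frac{1}{n+r},
\]
which differs from the decomposition used in Lemma~\ref{lem2} only in that the $i=1$ piece is kept separate rather than absorbed into a $1/(n(n+r))$ combination. The grouping is natural here because in the alternating setting $\sum_{n\ge 1}(-1)^{n+1}/n$ converges (to $\overline{\zeta}(1)=\log 2$), so the $1/n$ term need not be deferred. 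Multiplying by $(-1)^{n+1}H_{n}^{(p)}$ and interchanging summations (absolute convergence after regrouping makes this legitimate) gives
\[
S(p,m,1,r,1)=\sum_{i=1}^{m}\frac{(-1)^{m-i}}{r^{m-i+1}}\,S_{p,i}^{+,-}+\frac{(-1)^{m}}{r^{m}}\,S(p,r,1),
\]
and the stated closed form then drops out by substituting the evaluation of $S(p,r,1)$ supplied by Lemma~\ref{lem3}.

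For the second assertion, the idea is to apply Faulhaber's formula \eqref{ber} to rewrite $H_{n}^{(-p)}=1^{p}+2^{p}+\cdots+n^{p}$ as a polynomial in $n$ with Bernoulli-number coefficients, namely
\[
H_{n}^{(-p)}=\frac{1}{p+1}\sum_{\ell=0}^{p}\binom{p+1}{\ell}B_{\ell}^{+}\,n^{p+1-\ell}.
\]
Substituting this into $S(-p,m,1,r,1)$ reduces the problem to evaluating, for each $\ell$, the one-parameter series $\sum_{n\ge 1}(-1)^{n+1}/[n^{\,t}(n+r)]$ with $t=m-p-1+\ell\ge 0$, where convergence is ensured by the hypothesis $m\ge p+1$. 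A second application of the same partial-fraction identity decomposes each such series into terms whose $n$-sum is $\overline{\zeta}(i)$, plus a tail $\sum_{n\ge 1}(-1)^{n+1}/(n+r)$ that, after shifting the index by $r$, evaluates to $(-1)^{r}(\overline{\zeta}(1)-\overline{H}_{r})$. Collecting the contributions yields the claimed expression.

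The whole argument is computationally straightforward; the main obstacle is purely bookkeeping, namely tracking the signs and the shifted exponents of $r$ when composing the partial-fraction expansion with Faulhaber's formula, and confirming that the boundary value $t=0$ (which arises when $\ell=0$ and $m=p+1$) is handled consistently by interpreting the inner $i$-sum as empty and the tail as the single series $\sum_{n\ge 1}(-1)^{n+1}/(n+r)$.
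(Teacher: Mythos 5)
Your proposal is correct and is essentially the paper's own proof: the same partial-fraction decomposition of $1/[n^{m}(n+r)]$ retaining the $i=1$ term (legitimate in the alternating setting), the same substitution of Lemma~\ref{lem3} for $S(p,r,1)$, and for the second claim the same Faulhaber rewriting of $H_n^{(-p)}$ followed by the same partial-fraction expansion with the tail $\sum_{n\geq 1}(-1)^{n+1}/(n+r)=(-1)^{r}(\overline{\zeta}(1)-\overline{H}_{r})$, including the correct treatment of the boundary case $t=0$. Note that your derivation correctly produces $\overline{\zeta}(i)$ in the second formula, so the $\zeta(i)$ printed in the lemma's statement is evidently a typo (indeed $\zeta(1)$ would diverge at $i=1$, whereas $\overline{\zeta}(i)$ matches both your computation and the paper's proof).
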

\begin{proof}
When $p, m, r \in \mathbb N$, we can obtain that
\begin{align*}
&\quad \sum_{n=1}^\infty\frac{(-1)^{n+1}H_n^{(p)}}{n^{m}(n+r)}\\
&=\sum_{n=1}^\infty (-1)^{n+1}H_n^{(p)}\left(\sum_{i=1}^{m}\frac{(-1)^{m-i}}{r^{m-i+1}n^{i}}
+\frac{(-1)^{m}}{r^{m}(n+r)}\right)\\
&=\sum_{i=1}^{m}\frac{(-1)^{m-i}}{r^{m-i+1}}\sum_{n=1}^\infty \frac{(-1)^{n+1}H_n^{(p)}}{n^{i}}
+\frac{(-1)^{m}}{r^{m}}\sum_{n=1}^\infty\frac{(-1)^{n+1}H_n^{(p)}}{n+r}\,.
\end{align*}
With the help of Lemma {\ref{lem3}}, we get the desired result.

When $m, r \in \mathbb N$, $p \geq 0$ and $m \geq p+1$, we have
\begin{align*}
\sum_{n=1}^\infty\frac{(-1)^{n+1}H_n^{(-p)}}{n^{m}(n+r)}
&=\sum_{n=1}^\infty\frac{(-1)^{n+1}\sum_{\ell=1}^{n}\ell^{p}}{n^{m}(n+r)}\\
&=\sum_{n=1}^\infty
\frac{(-1)^{n+1}\frac{1}{p+1}\sum_{\ell=0}^{p} \binom{p+1}{\ell}B_{\ell}^{+}n^{p+1-\ell}}{n^{m}(n+r)}\\
&=\frac{1}{p+1}\sum_{\ell=0}^{p} \binom{p+1}{\ell}B_{\ell}^{+} \sum_{n=1}^\infty \frac{(-1)^{n+1}}{n^{m-p-1+\ell}(n+r)}\,.
\end{align*}
With the help of partial fraction expansion
\begin{align*}
\frac{1}{n^{t}(n+r)}
=\sum_{i=1}^{t}\frac{(-1)^{t-i}}{r^{t-i+1}}\cdot\frac{1}{n^{i}}
+\frac{(-1)^{t}}{r^{t}}\cdot\frac{1}{n+r},
\end{align*}
we get the desired result.
\end{proof}

\begin{Lem}[\cite{Rusen}]\label{lem5}
For $r, n, p \in \mathbb N$, we have
\begin{align*}
H_n^{(p,r)}=\sum_{m=0}^{r-1} \sum_{j=0}^{r-1-m} a(r,m,j) n^{j} H_n^{(p-m)}\,.
\end{align*}
The coefficients $a(r,m,j)$ satisfy the following recurrence relations:
\begin{align*}
&a(r+1,r,0)=-\sum_{m=0}^{r-1} a(r,m,r-m-1)\frac{1}{r-m}\,,\\
&a(r+1,m,\ell)=\sum_{j=\ell-1}^{r-1-m} \frac{a(r,m,j)}{j+1} \binom{j+1}{j-\ell+1}B_{j-\ell+1}^{+}\\
&\qquad\qquad \qquad(0\leq m \leq r-1, 1\leq \ell \leq r-m)\,,\\
&a(r+1,m,0)=-\sum_{y=0}^{m} \sum_{j=max\{0, m-y-1\}}^{r-1-y}a(r,y,j)D(r,m,j,y)\quad (0\leq m \leq r-1)\,,
\end{align*}
where
$$
D(r,m,j,y)=\sum_{\ell=max\{0, m-y-1\}}^{j} \frac{1}{j+1} \binom{j+1}{j-\ell}B_{j-\ell}^{+}\binom{\ell+1}{m-y}(-1)^{1+\ell-m+y}\,.
$$
The initial value is given by $a(1,0,0)=1$.
\end{Lem}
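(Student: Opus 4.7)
The plan is to proceed by induction on $r$. The base case $r = 1$ is immediate: the claimed formula reduces to $H_n^{(p,1)} = a(1,0,0)\, H_n^{(p)} = H_n^{(p)}$, which agrees with the definition $H_n^{(p,1)} = H_n^{(p)}$ and with the initial value $a(1,0,0) = 1$.

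For the inductive step, I assume the formula holds at level $r$ and apply the defining recursion $H_n^{(p,r+1)} = \sum_{k=1}^{n} H_k^{(p,r)}$ together with the inductive hypothesis, obtaining
\begin{align*}
H_n^{(p,r+1)} = \sum_{m=0}^{r-1}\sum_{j=0}^{r-1-m} a(r,m,j) \sum_{k=1}^{n} k^{j} H_k^{(p-m)}.
\end{align*}
The whole problem thus reduces to evaluating the inner partial sum $T(j,q,n) := \sum_{k=1}^{n} k^{j} H_k^{(q)}$ in closed form as a linear combination of expressions of the shape $n^{j'} H_n^{(q')}$, where the convention $H_n^{(-p)} = 1^p + 2^p + \cdots + n^p$ is used whenever the upper index becomes nonpositive.

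To handle $T(j,q,n)$ I would swap the order of summation, writing $T(j,q,n) = \sum_{i=1}^{n} i^{-q}\bigl(\sum_{k=1}^{n} k^{j} - \sum_{k=1}^{i-1} k^{j}\bigr)$, and then apply Faulhaber's formula~(\ref{ber}) to each power-sum piece. Expanding $(i-1)^{j+1-\ell}$ via the binomial theorem turns the dependence on $i$ into a sum of monomials $i^{a}$, which after summation against $i^{-q}$ assemble into generalized harmonic numbers $H_n^{(q-a)}$. The resulting closed form is
\begin{align*}
T(j,q,n) = \frac{1}{j+1}\sum_{\ell=0}^{j}\binom{j+1}{\ell}B_{\ell}^{+}\Bigl[n^{j+1-\ell} H_n^{(q)} - \sum_{a=0}^{j+1-\ell}\binom{j+1-\ell}{a}(-1)^{j+1-\ell-a} H_n^{(q-a)}\Bigr].
\end{align*}
Substituting back produces $H_n^{(p,r+1)}$ as a linear combination of terms of the desired shape $n^{j'} H_n^{(p-m')}$, with $0 \leq m' \leq r$ and $0 \leq j' \leq r-m'$.

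To extract the three recurrences I would match coefficients of $n^{j'}H_n^{(p-m')}$ against the target expansion $\sum_{m'=0}^{r}\sum_{j'=0}^{r-m'} a(r+1,m',j')\,n^{j'}H_n^{(p-m')}$. Terms with $j' \geq 1$ receive contributions only from the $n^{j+1-\ell}H_n^{(q)}$ piece, with the substitution $\ell = j+1-j'$; this immediately yields the second recurrence. Terms with $j'=0$ receive contributions from all tuples $(m,j,\ell,a)$ satisfying $m+a=m'$; after the change of variable $s = j - \ell$ these repack into exactly the stated quantity $D(r,m',j,m)$, with the alternating factor $(-1)^{1+\ell-m'+m}$ arising from the binomial expansion of $(i-1)^{j+1-\ell}$. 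Finally, the boundary case $m' = r$ is forced by $a = r - m$ and $a \leq j+1-\ell$, which pin down $j = r-1-m$ and $\ell = 0$, collapsing $D$ to the single value $1/(r-m)$ and producing the first recurrence. The main obstacle is this combinatorial bookkeeping for $j' = 0$: several $(m,j,\ell,a)$ tuples contribute to the same $a(r+1,m',0)$, and a careful reindexing is needed to align the outcome with the specific shape of $D(r,m,j,y)$ and its summation range $\ell \geq \max\{0, m-y-1\}$.
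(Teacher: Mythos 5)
Your proposal is correct and matches the intended proof: the present paper imports this lemma from \cite{Rusen} without reproving it, but the stated recurrences encode exactly your route --- induction on $r$, interchanging summation in $\sum_{k=1}^{n}k^{j}H_k^{(q)}$, applying Faulhaber's formula (\ref{ber}) to $\sum_{k=1}^{n}k^{j}-\sum_{k=1}^{i-1}k^{j}$, and binomially expanding $(i-1)^{j+1-\ell}$, whose sign $(-1)^{j+1-\ell-a}$ becomes the factor $(-1)^{1+\ell-m+y}$ in $D(r,m,j,y)$ under the reindexing $\ell\mapsto j-\ell$ (which also converts the constraint $a\leq j+1-\ell$ into the range $\ell\geq\max\{0,m-y-1\}$). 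Your coefficient matching is sound throughout, including the boundary case $m'=r$, where $a=r-y\leq j+1-\ell\leq r-y$ forces $j=r-1-y$ and $\ell=0$ and collapses the contribution to $1/(r-y)$, giving the first recurrence.
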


Now we are able to prove our main theorems of this section.
\begin{theorem}\label{maintheorem1}
Let $s, p, m, k \in \mathbb N$ with $m \geq s$, then we have,
\begin{align*}
\sum_{n=1}^\infty\frac{H_n^{(p,s)}}{n^{m}\binom{n+k}{k}}
=\sum_{\ell_{1}=0}^{s-1} \sum_{\ell_{2}=0}^{s-1-\ell_{1}}a(s,\ell_{1},\ell_{2})
\sum_{r=1}^{k}(-1)^{r+1} r \binom{k}{r} S(p-\ell_{1},m-\ell_{2},1,r,0)\,,
\end{align*}
where $S(p-\ell_{1},m-\ell_{2},1,r,0)$ is given in Lemma \ref{lem2} and $a(s,\ell_{1},\ell_{2})$ is given in Lemma \ref{lem5}.
Therefore generalized hyperharmonic number sum
$$
\sum_{n=1}^\infty\frac{H_n^{(p,s)}}{n^{m}\binom{n+k}{k}}
$$
can be expressed in terms of classical Euler sums, zeta values and generalized harmonic numbers.
\end{theorem}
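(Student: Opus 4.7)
The plan is to combine two standard decomposition ingredients: the structural identity in Lemma~\ref{lem5} that flattens the hyperharmonic number $H_n^{(p,s)}$ to classical harmonic numbers, and a partial-fraction expansion of the reciprocal binomial coefficient that breaks $\binom{n+k}{k}^{-1}$ into a sum of simple poles at $n=-r$. Once both are in place, the double sum decouples into the quantities $S(p-\ell_1,m-\ell_2,1,r,0)$ already evaluated in Lemma~\ref{lem2}.

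More concretely, I first apply Lemma~\ref{lem5} to write
\[
H_n^{(p,s)}=\sum_{\ell_1=0}^{s-1}\sum_{\ell_2=0}^{s-1-\ell_1} a(s,\ell_1,\ell_2)\, n^{\ell_2}\, H_n^{(p-\ell_1)},
\]
with the convention from Definition~\ref{def1} that $H_n^{(-p)}=1^p+\cdots+n^p$ whenever $p-\ell_1\leq 0$. Next I establish the partial fraction identity
\[
\frac{1}{\binom{n+k}{k}}=\frac{k!\,n!}{(n+k)!}=\sum_{r=1}^{k}\frac{(-1)^{r+1} r\binom{k}{r}}{n+r},
\]
which follows from the standard partial fraction expansion of $\frac{1}{(n+1)(n+2)\cdots(n+k)}$ with residues $A_r=(-1)^{r-1}/((r-1)!(k-r)!)$ and then absorbing $k!$ into $r\binom{k}{r}$.

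Substituting both decompositions into $\sum_{n\ge 1} H_n^{(p,s)}/(n^m\binom{n+k}{k})$ and swapping the order of summation (legitimate once the condition $m\geq s$ guarantees absolute convergence in every resulting inner series, since then $m-\ell_2\geq 1+\ell_1$, which dominates the $\ell_1$-th-power growth of $H_n^{(-\ell_1+p)}$ when $p\leq\ell_1$) yields
\[
\sum_{n=1}^{\infty}\frac{H_n^{(p,s)}}{n^{m}\binom{n+k}{k}}=\sum_{\ell_1,\ell_2} a(s,\ell_1,\ell_2)\sum_{r=1}^{k}(-1)^{r+1} r\binom{k}{r}\sum_{n=1}^{\infty}\frac{H_n^{(p-\ell_1)}}{n^{m-\ell_2}(n+r)},
\]
and the innermost series is precisely $S(p-\ell_1,m-\ell_2,1,r,0)$. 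The second assertion then follows by feeding this into Lemma~\ref{lem2}, which expresses each $S(p-\ell_1,m-\ell_2,1,r,0)$ as a combination of linear Euler sums $S_{p,i}^{+,+}$, zeta values, and finite harmonic sums, all of which (for small enough parameters) reduce further to classical constants through Flajolet--Salvy's evaluations.

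The main obstacle I anticipate is bookkeeping rather than analytic: the case $p-\ell_1\le 0$ forces one to branch to the Faulhaber-type evaluation of Lemma~\ref{lem2} (the second formula), and one must check that the shifted exponent $m-\ell_2$ always meets the hypothesis $m-\ell_2\ge (\ell_1-p)+2$ required there. This inequality is exactly what the assumption $m\ge s$ is tailored to enforce, because $\ell_1+\ell_2\le s-1$ gives $m-\ell_2\ge m-s+1+\ell_1\ge \ell_1+1\ge (\ell_1-p)+2$ whenever $p\ge 1$, and the boundary cases $p=0$ or isolated small parameters can be absorbed into the statement by agreeing (as Definition~\ref{def1} already does) to read $H_n^{(0)}=n$. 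Verifying this convergence/hypothesis bookkeeping is the only delicate point; the algebraic manipulation itself is automatic from the two ingredients above.
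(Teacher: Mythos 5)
Your proposal follows essentially the same route as the paper's own proof: apply Lemma~\ref{lem5} to flatten $H_n^{(p,s)}$ into $n^{\ell_2}H_n^{(p-\ell_1)}$, expand $\binom{n+k}{k}^{-1}=\sum_{r=1}^{k}(-1)^{r+1}r\binom{k}{r}\frac{1}{n+r}$, swap the orders of summation, and identify the inner series as $S(p-\ell_1,m-\ell_2,1,r,0)$. Your explicit derivation of the partial-fraction identity and your check that $m\ge s$ (with $p\ge 1$) guarantees both absolute convergence and the hypothesis $m-\ell_2\ge(\ell_1-p)+2$ needed for the Faulhaber branch of Lemma~\ref{lem2} are correct details that the paper uses silently.
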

\begin{proof}
By using Lemma \ref{lem5}, we have
\begin{align*}
&\quad \sum_{n=1}^\infty\frac{H_n^{(p,s)}}{n^{m}\binom{n+k}{k}}\\
&=\sum_{\ell_{1}=0}^{s-1} \sum_{\ell_{2}=0}^{s-1-\ell_{1}}a(s,\ell_{1},\ell_{2})
\sum_{n=1}^\infty\frac{H_n^{(p-\ell_{1})}}{n^{m-\ell_{2}}\binom{n+k}{k}}\\
&=\sum_{\ell_{1}=0}^{s-1} \sum_{\ell_{2}=0}^{s-1-\ell_{1}}a(s,\ell_{1},\ell_{2})
\sum_{n=1}^\infty\frac{H_n^{(p-\ell_{1})}}{n^{m-\ell_{2}}}\sum_{r=1}^{k}(-1)^{r+1} r \binom{k}{r}\frac{1}{n+r}\\
&=\sum_{\ell_{1}=0}^{s-1} \sum_{\ell_{2}=0}^{s-1-\ell_{1}}a(s,\ell_{1},\ell_{2})\sum_{r=1}^{k}(-1)^{r+1} r \binom{k}{r}
\sum_{n=1}^\infty\frac{H_n^{(p-\ell_{1})}}{n^{m-\ell_{2}}(n+r)}\\
&=\sum_{\ell_{1}=0}^{s-1} \sum_{\ell_{2}=0}^{s-1-\ell_{1}}a(s,\ell_{1},\ell_{2})
\sum_{r=1}^{k}(-1)^{r+1} r \binom{k}{r} S(p-\ell_{1},m-\ell_{2},1,r,0)\,.
\end{align*}
\end{proof}

\begin{theorem}\label{maintheorem2}
Let $s, p, m, k \in \mathbb N$ with $m \geq s$, then we have,
\begin{align*}
&\quad \sum_{n=1}^\infty\frac{(-1)^{n+1}H_n^{(p,s)}}{n^{m}\binom{n+k}{k}}\\
&=\sum_{\ell_{1}=0}^{s-1} \sum_{\ell_{2}=0}^{s-1-\ell_{1}}a(s,\ell_{1},\ell_{2})
\sum_{r=1}^{k}(-1)^{r+1} r \binom{k}{r} S(p-\ell_{1},m-\ell_{2},1,r,1)\,,
\end{align*}
where $S(p-\ell_{1},m-\ell_{2},1,r,1)$ is given in Lemma \ref{lem4}.
Therefore generalized hyperharmonic number sum
$$
\sum_{n=1}^\infty\frac{(-1)^{n+1}H_n^{(p,s)}}{n^{m}\binom{n+k}{k}}
$$
can be expressed in terms of classical alternating Euler sums, zeta values and generalized (alternating) harmonic numbers.
\end{theorem}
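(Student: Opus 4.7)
The plan is to follow exactly the blueprint of the proof of Theorem \ref{maintheorem1}, with the alternating apparatus of Lemmas \ref{lem3} and \ref{lem4} replacing the non-alternating Lemmas \ref{lem1} and \ref{lem2}. The two essential ingredients are the expansion of the generalized hyperharmonic number in Lemma \ref{lem5}, and the standard partial fraction identity
$$
\frac{1}{\binom{n+k}{k}} \;=\; \sum_{r=1}^{k} (-1)^{r+1}\, r \binom{k}{r}\, \frac{1}{n+r},
$$
which splits the reciprocal binomial factor into simple linear pieces.

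First I would apply Lemma \ref{lem5} to write $H_n^{(p,s)}=\sum_{\ell_1=0}^{s-1}\sum_{\ell_2=0}^{s-1-\ell_1} a(s,\ell_1,\ell_2)\, n^{\ell_2} H_n^{(p-\ell_1)}$. Substituting this together with the partial fraction identity above into the left-hand side of the theorem, and swapping the finite $\ell_1,\ell_2,r$-sums with the infinite $n$-sum, reduces everything to inner series of the form
$$
\sum_{n=1}^\infty \frac{(-1)^{n+1}\, H_n^{(p-\ell_1)}}{n^{m-\ell_2}(n+r)},
$$
which is exactly $S(p-\ell_1,\,m-\ell_2,\,1,\,r,\,1)$ by Definition \ref{def1}; when $p-\ell_1\le 0$, the convention $H_n^{(-q)}=1^q+2^q+\cdots+n^q$ from Definition \ref{def1} handles that range without modification. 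Lemma \ref{lem4} then provides a closed form for each such $S$ in terms of alternating linear Euler sums $S_{\cdot,\cdot}^{+,-}$, alternating zeta values $\overline{\zeta}(\cdot)$, the classical $\zeta$-values, and (alternating) harmonic numbers; collecting everything yields both the displayed identity and the qualitative conclusion that the original sum lies in the claimed ring of constants.

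The routine work consists of algebraic bookkeeping and a change of order of summation. The main obstacle I anticipate is justifying that interchange, i.e.\ absolute convergence of the expanded double series. The hypothesis $m\ge s$ is precisely what controls this: the factor $n^{\ell_2}$ with $\ell_2\le s-1$, combined with the at most polynomial growth of $H_n^{(p-\ell_1)}$ (of order $|p-\ell_1|+1$ when $p-\ell_1\le 0$, logarithmic otherwise), is dominated by the effective denominator $n^{m-\ell_2+1}$ once one uses $\ell_1+\ell_2\le s-1$; this matches the admissibility bound $m-\ell_2\ge (\ell_1-p)+1$ needed in Lemma \ref{lem4} for negative upper index. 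This is a slightly milder version of the convergence check that underlies Theorem \ref{maintheorem1} (the alternating sign $(-1)^{n+1}$ only helps), so once it is in place the rest of the proof is a direct rearrangement identical in structure to the proof of Theorem \ref{maintheorem1}.
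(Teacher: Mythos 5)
Your proposal is correct and coincides with the paper's own proof of Theorem \ref{maintheorem2}: expand $H_n^{(p,s)}$ via Lemma \ref{lem5}, insert the partial fraction identity $\frac{1}{\binom{n+k}{k}}=\sum_{r=1}^{k}(-1)^{r+1}r\binom{k}{r}\frac{1}{n+r}$, interchange the finite and infinite sums, and recognize the inner series as $S(p-\ell_1,m-\ell_2,1,r,1)$, to be evaluated by Lemma \ref{lem4}. Your explicit justification of the interchange (absolute convergence under $m\ge s$, including the polynomial growth of $H_n^{(p-\ell_1)}$ for nonpositive order) is a welcome addition that the paper leaves tacit, but it does not change the argument.
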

\begin{proof}
By using Lemma \ref{lem5}, we have
\begin{align*}
&\quad \sum_{n=1}^\infty\frac{(-1)^{n+1}H_n^{(p,s)}}{n^{m}\binom{n+k}{k}}\\
&=\sum_{\ell_{1}=0}^{s-1} \sum_{\ell_{2}=0}^{s-1-\ell_{1}}a(s,\ell_{1},\ell_{2})
\sum_{n=1}^\infty\frac{(-1)^{n+1}H_n^{(p-\ell_{1})}}{n^{m-\ell_{2}}\binom{n+k}{k}}\\
&=\sum_{\ell_{1}=0}^{s-1} \sum_{\ell_{2}=0}^{s-1-\ell_{1}}a(s,\ell_{1},\ell_{2})
\sum_{n=1}^\infty\frac{(-1)^{n+1}H_n^{(p-\ell_{1})}}{n^{m-\ell_{2}}}\sum_{r=1}^{k}(-1)^{r+1} r \binom{k}{r}\frac{1}{n+r}\\
&=\sum_{\ell_{1}=0}^{s-1} \sum_{\ell_{2}=0}^{s-1-\ell_{1}}a(s,\ell_{1},\ell_{2})\sum_{r=1}^{k}(-1)^{r+1} r \binom{k}{r}
\sum_{n=1}^\infty\frac{(-1)^{n+1}H_n^{(p-\ell_{1})}}{n^{m-\ell_{2}}(n+r)}\\
&=\sum_{\ell_{1}=0}^{s-1} \sum_{\ell_{2}=0}^{s-1-\ell_{1}}a(s,\ell_{1},\ell_{2})
\sum_{r=1}^{k}(-1)^{r+1} r \binom{k}{r} S(p-\ell_{1},m-\ell_{2},1,r,1)\,.
\end{align*}
\end{proof}

\begin{Example}
Some illustrative examples are as following.

When $s=2, p=2, m=3, k=2$, we have
\begin{align*}
&\sum_{n=1}^\infty\frac{H_n^{(2,2)}}{n^{3}\binom{n+2}{2}}
=-\frac{9}{2}\zeta(5)+\frac{25}{4}\zeta(3)-\frac{17}{720}\pi^{4}-\frac{1}{4}\pi^{2}\,,\\
&\sum_{n=1}^\infty\frac{(-1)^{n+1}H_n^{(2,2)}}{n^{3}\binom{n+2}{2}}
=S_{2,3}^{+,-}-\frac{1}{2}S_{2,2}^{+,-}+\frac{3}{16}\zeta(3)-S_{1,3}^{+,-}
+\frac{3}{2}S_{1,2}^{+,-}-4S_{1,1}^{+,-}+\zeta(2)\,.
\end{align*}

When $s=2, p=1, m=3, k=2$, we have
\begin{align*}
\sum_{n=1}^\infty\frac{(-1)^{n+1}H_n^{(1,2)}}{n^{3}\binom{n+2}{2}}
&=S_{1,3}^{+,-}-\frac{1}{2}S_{1,2}^{+,-}-\frac{1}{2}\log{2}-\frac{7}{8}\zeta(2)+\frac{3}{2}\,,\\
&=-2Li_{4}(\frac{1}{2})+\frac{11}{4}\zeta(4)+\frac{1}{2}\zeta(2){\log{2}}^{2}-\frac{1}{12}(\log{2})^{4}\\
&\quad -\frac{7}{4}\zeta(3)\log{2}-\frac{5}{16}\zeta(3)-\frac{7}{8}\zeta(2)
-\frac{1}{2}\log{2}+\frac{3}{2}\,.
\end{align*}
In this expression we use the well-known polylogarithm function
$$
Li_{p}(x):=\sum_{n=1}^\infty \frac{x^n}{n^p}\quad (\lvert x \lvert \leq 1,\quad p \in \mathbb N)\,.
$$
\end{Example}

\section{Quadratic generalized hyperharmonic number sums}

In this section, we develop closed form representations for quadratic generalized hyperharmonic number sums with reciprocal binomial coefficients of types
$$
\sum_{n=1}^\infty\frac{H_n^{(p_{1},s_{1})}H_n^{(p_{2},s_{2})}}{n^{m}\binom{n+k}{k}}\,\quad\hbox{and}\quad
\sum_{n=1}^\infty\frac{(-1)^{n+1} H_n^{(p_{1},s_{1})}H_n^{(p_{2},s_{2})}}{n^{m}\binom{n+k}{k}}\,.
$$

Before going further, we introduce some notations and lemmata.

Following Flajolet-Salvy's paper \cite{Flajolet}, we write classical (alternating) quadratic Euler sums as
\begin{align*}
S_{p_{1},p_{2},q}^{+,+,+}:=\sum_{n=1}^\infty \frac{H_n^{(p_{1})}H_n^{(p_{2})}}{{n}^{q}}\quad\hbox{and}\quad
S_{p_{1},p_{2},q}^{+,+,-}:=\sum_{n=1}^\infty \frac{(-1)^{n-1}H_n^{(p_{1})}H_n^{(p_{2})}}{{n}^{q}}\,.
\end{align*}

\begin{Lem}[{{Abel's lemma on summation by parts} \cite{Abel,Chu}}]\label{lem6}
Let $\{f_k\}$ and $\{g_k\}$ be two sequences, and define the forward difference and backward difference, respectively, as
$$
\Delta\tau_k=\tau_{k+1}-\tau_k\quad\hbox{and}\quad \nabla\tau_k=\tau_k-\tau_{k-1}\,,
$$
then, there holds the relation:
\begin{align*}
\sum_{k=1}^\infty f_k\nabla g_k=\lim_{n\to\infty}f_n g_n-f_1 g_0-\sum_{k=1}^\infty g_k \Delta f_k \,.
\end{align*}
\end{Lem}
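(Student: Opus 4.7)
The plan is to prove Lemma \ref{lem6} by a standard Abel-style telescoping argument applied to the finite partial sums, then pass to the limit. Since this is the classical summation-by-parts identity, the whole proof is essentially bookkeeping with index shifts; the only substantive content is keeping track of the two boundary terms.

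First I would fix $n \in \mathbb N$ and expand the finite partial sum
\begin{align*}
\sum_{k=1}^{n} f_k \nabla g_k = \sum_{k=1}^{n} f_k (g_k - g_{k-1}) = \sum_{k=1}^{n} f_k g_k - \sum_{k=1}^{n} f_k g_{k-1}.
\end{align*}
In the second sum, I would reindex via $j = k-1$, obtaining $\sum_{j=0}^{n-1} f_{j+1} g_j$, and then peel off the $j=0$ term (which contributes $f_1 g_0$) so that the remaining part shares the index range $1 \le k \le n-1$ with the corresponding piece of the first sum (after peeling off its $k=n$ term, which contributes $f_n g_n$).

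Next I would combine the two matching interior sums and factor out $g_k$:
\begin{align*}
\sum_{k=1}^{n-1} f_k g_k - \sum_{k=1}^{n-1} f_{k+1} g_k = -\sum_{k=1}^{n-1} g_k (f_{k+1} - f_k) = -\sum_{k=1}^{n-1} g_k \Delta f_k.
\end{align*}
Putting the pieces back together yields the finite identity
\begin{align*}
\sum_{k=1}^{n} f_k \nabla g_k = f_n g_n - f_1 g_0 - \sum_{k=1}^{n-1} g_k \Delta f_k.
\end{align*}

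Finally I would let $n \to \infty$. Under the standing hypothesis that the series $\sum_{k=1}^\infty f_k \nabla g_k$ and $\sum_{k=1}^\infty g_k \Delta f_k$ converge and that $\lim_{n\to\infty} f_n g_n$ exists (these are the implicit assumptions in the statement), the identity passes to the limit term-by-term and produces exactly
\begin{align*}
\sum_{k=1}^\infty f_k \nabla g_k = \lim_{n\to\infty} f_n g_n - f_1 g_0 - \sum_{k=1}^\infty g_k \Delta f_k,
\end{align*}
as claimed. The main (very mild) obstacle is simply ensuring the off-by-one index bookkeeping is correct when reindexing the shifted sum and when peeling off the two boundary contributions; there is no analytic difficulty beyond invoking convergence of the two series to take the limit.
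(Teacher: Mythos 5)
Your proof is correct. The paper does not actually prove this lemma --- it is quoted as a known classical result, with citations to Abel and Chu --- so there is no in-paper argument to compare against; your telescoping derivation of the finite identity
\begin{align*}
\sum_{k=1}^{n} f_k \nabla g_k = f_n g_n - f_1 g_0 - \sum_{k=1}^{n-1} g_k \Delta f_k\,,
\end{align*}
followed by a passage to the limit, is exactly the standard argument underlying the cited statement, and your index bookkeeping (peeling off $f_1 g_0$ at $j=0$ and $f_n g_n$ at $k=n$) checks out. One small tightening worth noting: your list of standing hypotheses is mildly redundant. Once the finite identity is established, convergence of the two series $\sum_k f_k \nabla g_k$ and $\sum_k g_k \Delta f_k$ already \emph{forces} $\lim_{n\to\infty} f_n g_n$ to exist, since $f_n g_n$ is expressed as a sum of convergent quantities; conversely, existence of the limit plus convergence of either one series gives the other. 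So it suffices to assume that any two of the three quantities in the identity exist --- which is also how the lemma is applied in the paper (in Lemma 7, where $f_n = H_n^{(p_1)} H_n^{(p_2)}$ and $g_n = \frac{1}{n+1}+\cdots+\frac{1}{n+r}$, so that $f_n g_n \to 0$ and the series on the right converges).
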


\begin{Lem}\label{lem7}
For $r, p_{1}, p_{2}\in \mathbb N$, we have
\begin{align*}
\sum_{n=1}^\infty \frac{r H_n^{(p_{1})} H_n^{(p_{2})}}{n(n+r)}
&=S_{p_{1},p_{2}+1}^{+,+}+S_{p_{2},p_{1}+1}^{+,+}-\zeta(p_{1}+p_{2}+1)
+\sum_{b=1}^{r-1}S(p_{1},p_{2},1,b,0)\\
&\quad +\sum_{b=1}^{r-1}S(p_{2},p_{1},1,b,0)
-\sum_{b=1}^{r-1}S(0,p_{1}+p_{2}+1,1,b,0)\,.
\end{align*}
\end{Lem}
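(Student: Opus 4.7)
The plan is to first reduce the double-harmonic weight to unit-level building blocks via the telescoping partial-fraction identity
$$\frac{r}{n(n+r)}=\sum_{b=0}^{r-1}\frac{1}{(n+b)(n+b+1)},$$
which converts the left-hand side into $\sum_{b=0}^{r-1}\Sigma_b$ with $\Sigma_b:=\sum_{n\ge 1}H_n^{(p_1)}H_n^{(p_2)}/[(n+b)(n+b+1)]$. The problem then becomes to evaluate each $\Sigma_b$ and recognize the pieces either as the quantities $S(\cdot,\cdot,1,b,0)$ from Definition~\ref{def1} (when $b\ge 1$) or as classical Euler sums and a zeta value (when $b=0$).

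For fixed $b\ge 0$, setting $T(n)=H_n^{(p_1)}H_n^{(p_2)}$, I would verify by direct fraction combination the elementary identity
$$\frac{T(n)}{(n+b)(n+b+1)}=\left(\frac{T(n)}{n+b}-\frac{T(n+1)}{n+b+1}\right)+\frac{T(n+1)-T(n)}{n+b+1}.$$
Summing on $n\ge 1$, the parenthesized block telescopes to $T(1)/(b+1)=1/(b+1)$, the tail $T(N)/(N+b)$ vanishing even in the borderline case $p_1=p_2=1$ since $T(N)\sim(\log N)^2$. The remaining sum is handled via the explicit expansion
$$T(n+1)-T(n)=\frac{H_n^{(p_2)}}{(n+1)^{p_1}}+\frac{H_n^{(p_1)}}{(n+1)^{p_2}}+\frac{1}{(n+1)^{p_1+p_2}},$$
after which the shift $m=n+1$ together with $H_{m-1}^{(p)}=H_m^{(p)}-m^{-p}$ rewrites the three pieces as $\sum_{m\ge 2}H_m^{(p_2)}/[m^{p_1}(m+b)]$, $\sum_{m\ge 2}H_m^{(p_1)}/[m^{p_2}(m+b)]$, and $-\sum_{m\ge 2}1/[m^{p_1+p_2}(m+b)]$. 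For $b\ge 1$, restoring the $m=1$ terms promotes these to $S(p_2,p_1,1,b,0)$, $S(p_1,p_2,1,b,0)$, and $-S(0,p_1+p_2+1,1,b,0)$ (using the convention $H_n^{(0)}=n$ from Definition~\ref{def1}); for $b=0$ the same manipulation collapses them to $S_{p_2,p_1+1}^{+,+}$, $S_{p_1,p_2+1}^{+,+}$, and $-\zeta(p_1+p_2+1)$.

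Summing over $b\in\{0,1,\ldots,r-1\}$ and isolating the $b=0$ contribution then gives the claimed identity. The main bookkeeping obstacle is verifying that the boundary corrections cancel cleanly: the $T(1)/(b+1)$ produced by the telescoping step, together with the three $m=1$ subtractions arising when passing from $\sum_{m\ge 2}$ to $\sum_{m\ge 1}$, must add to zero at each $b$. A small check (the $1/(b+1)$ contributes $+1$, the two $H$-type restorations contribute $-2\cdot\frac{1}{b+1}$, and the $S(0,\cdot,\cdot)$ restoration contributes $+\frac{1}{b+1}$) shows that this cancellation is automatic. As an alternative presentation, one could instead induct on $r$: the $r=1$ case is exactly $\Sigma_0$, and the identity $\frac{r+1}{n(n+r+1)}-\frac{r}{n(n+r)}=\frac{1}{(n+r)(n+r+1)}$ gives $F(r+1)-F(r)=\Sigma_r$, which by the computation above is precisely the $b=r$ term added to the right-hand side.
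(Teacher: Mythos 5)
Your proof is correct and is in substance the paper's own argument: the paper applies Abel's summation by parts (Lemma \ref{lem6}) with $f_n=H_n^{(p_1)}H_n^{(p_2)}$ and $g_n=\frac{1}{n+1}+\cdots+\frac{1}{n+r}$, which produces exactly your forward-difference expansion $\frac{H_n^{(p_1)}}{(n+1)^{p_2}}+\frac{H_n^{(p_2)}}{(n+1)^{p_1}}+\frac{1}{(n+1)^{p_1+p_2}}$, the same index shift with $H_{m-1}^{(p)}=H_m^{(p)}-m^{-p}$, and the same identification with $S(\cdot,\cdot,1,b,0)$ and the Euler sums; your per-$b$ telescoping identity is precisely this Abel step carried out separately on each piece $\frac{1}{(n+b)(n+b+1)}$ of $\frac{r}{n(n+r)}$. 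The only difference is bookkeeping: the paper absorbs the boundary term $f_1g_0$ into an $n=0$ summand instead of verifying the $\frac{1}{b+1}$ cancellations explicitly as you do, and both treatments handle the vanishing tail $T(N)g_N\to 0$ the same way.
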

\begin{proof}
Set
$$
f_{n}:=H_n^{(p_{1})} H_n^{(p_{2})}\quad\hbox{and}\quad
g_{n}:=\frac{1}{n+1}+\cdots+\frac{1}{n+r}\,,
$$
by using Lemma \ref{lem6}, we have
\begin{align*}
&\quad -\sum_{n=1}^\infty \frac{r H_n^{(p_{1})} H_n^{(p_{2})}}{n(n+r)}\\
&=\sum_{n=1}^\infty H_n^{(p_{1})} H_n^{(p_{2})}\left(\bigg(\frac{1}{n+1}+\cdots+\frac{1}{n+r}\bigg)-\bigg(\frac{1}{n}+\cdots+\frac{1}{n+r-1}\bigg)\right)\\
&=-\sum_{n=0}^\infty \bigg(\frac{1}{n+1}+\cdots+\frac{1}{n+r}\bigg)
\bigg(\frac{H_n^{(p_{1})}}{(n+1)^{p_{2}}}+\frac{H_n^{(p_{2})}}{(n+1)^{p_{1}}}+\frac{1}{(n+1)^{p_{1}+p_{2}}}\bigg)\\
&=-\sum_{n=0}^\infty \sum_{b=0}^{r-1}\frac{1}{n+1+b}
\bigg(\frac{H_{n+1}^{(p_{1})}}{(n+1)^{p_{2}}}+\frac{H_{n+1}^{(p_{2})}}{(n+1)^{p_{1}}}-\frac{1}{(n+1)^{p_{1}+p_{2}}}\bigg)\\
&=-\sum_{b=0}^{r-1}\sum_{n=1}^\infty \frac{1}{n+b}
\bigg(\frac{H_{n}^{(p_{1})}}{n^{p_{2}}}+\frac{H_{n}^{(p_{2})}}{n^{p_{1}}}-\frac{1}{n^{p_{1}+p_{2}}}\bigg)\\
&=-\sum_{n=1}^\infty
\bigg(\frac{H_{n}^{(p_{1})}}{n^{p_{2}+1}}+\frac{H_{n}^{(p_{2})}}{n^{p_{1}+1}}-\frac{1}{n^{p_{1}+p_{2}+1}}\bigg)
-\sum_{b=1}^{r-1}\sum_{n=1}^\infty \frac{H_{n}^{(p_{1})}}{n^{p_{2}}(n+b)}\\
&\quad -\sum_{b=1}^{r-1}\sum_{n=1}^\infty \frac{H_{n}^{(p_{2})}}{n^{p_{1}}(n+b)}+\sum_{b=1}^{r-1}\sum_{n=1}^\infty \frac{1}{n^{p_{1}+p_{2}}(n+b)}\\
&=-S_{p_{1},p_{2}+1}^{+,+}-S_{p_{2},p_{1}+1}^{+,+}+\zeta(p_{1}+p_{2}+1)
-\sum_{b=1}^{r-1}S(p_{1},p_{2},1,b,0)\\
&\quad -\sum_{b=1}^{r-1}S(p_{2},p_{1},1,b,0)
+\sum_{b=1}^{r-1}S(0,p_{1}+p_{2}+1,1,b,0)\,.
\end{align*}
\end{proof}

\begin{Definition}\label{def2}
For $p_{1}, p_{2}\in \mathbb Z$ and $m, r, t \in \mathbb N$, define the quantities $T(p_{1},p_{2},m,t,r,0)$ and $T(p_{1},p_{2},m,t,r,1)$ as
\begin{align*}
&T(p_{1},p_{2},m,t,r,0):=\sum_{n=1}^\infty\frac{H_n^{(p_{1})}H_n^{(p_{2})}}{n^{m}(n+r)^{t}}\,,\\
&T(p_{1},p_{2},m,t,r,1):=\sum_{n=1}^\infty\frac{(-1)^{n+1} H_n^{(p_{1})}H_n^{(p_{2})}}{n^{m}(n+r)^{t}}\,.
\end{align*}
When $p \geq 0$, $H_n^{(-p)}$ is understood to be the sum $1^p+2^p+\cdots+n^p$.
\end{Definition}

\begin{Lem}\label{lem8}
Let $p_{1}, p_{2}, m, r \in \mathbb N$, then we have
\begin{align*}
&\quad T(p_{1},p_{2},m,1,r,0)\\
&=\sum_{i=2}^{m}\frac{(-1)^{m-i}}{r^{m-i+1}}S_{p_{1},p_{2},i}^{+,+,+}
+\frac{(-1)^{m-1}}{r^{m}}\left(S_{p_{1},p_{2}+1}^{+,+}+S_{p_{2},p_{1}+1}^{+,+}\right)\\
&\quad -\frac{(-1)^{m-1}}{r^{m}}\zeta(p_{1}+p_{2}+1) +\frac{(-1)^{m-1}}{r^{m}}\sum_{b=1}^{r-1}S(p_{1},p_{2},1,b,0)\\
&\quad +\frac{(-1)^{m-1}}{r^{m}}\left(\sum_{b=1}^{r-1}S(p_{2},p_{1},1,b,0)
-\sum_{b=1}^{r-1}S(0,p_{1}+p_{2}+1,1,b,0)\right)\,.
\end{align*}
Let $p_{1}, m, r \in \mathbb N$, $p_{2} \in \mathbb N_{0}$ and $m \geq p_{2}+2$, then we have
\begin{align*}
&T(p_{1},-p_{2},m,1,r,0)\\
&=\frac{1}{p_{2}+1}\sum_{\ell=0}^{p_{2}} \binom{p_{2}+1}{\ell}B_{\ell}^{+}S(p_{1},m-p_{2}-1+\ell,1,r,0)\,.
\end{align*}
Let $m, r \in \mathbb N$, $p_{1}, p_{2} \in \mathbb N_{0}$ and $m \geq p_{1}+p_{2}+3$, then we have
\begin{align*}
T(-p_{1},-p_{2},m,1,r,0)&=\frac{1}{(p_{1}+1)(p_{2}+1)}\sum_{\ell_{1}=0}^{p_{1}} \sum_{\ell_{2}=0}^{p_{2}} \binom{p_{1}+1}{\ell_{1}}\binom{p_{2}+1}{\ell_{2}}\\
&\quad \times B_{\ell_{1}}^{+}B_{\ell_{2}}^{+}S(0,m-p_{1}-p_{2}-1+\ell_{1}+\ell_{2},1,r,0)\,.
\end{align*}
\end{Lem}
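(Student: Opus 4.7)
The three formulas in Lemma 8 all have the same conceptual structure as the proof of Lemma 2, so the plan is to mimic that pattern but feed the results through Lemma 7 rather than Lemma 1 whenever the quadratic kernel $H_n^{(p_1)}H_n^{(p_2)}$ survives.

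For the first formula (all positive indices), I would split off the $1/(n+r)$ term via the standard partial fraction decomposition
\[
\frac{1}{n^{m}(n+r)}=\sum_{i=2}^{m}\frac{(-1)^{m-i}}{r^{m-i+1}}\cdot\frac{1}{n^{i}}+\frac{(-1)^{m-1}}{r^{m-1}}\cdot\frac{1}{n(n+r)}.
\]
Multiplying by $H_n^{(p_1)}H_n^{(p_2)}$ and summing over $n$, the first block collapses into $\sum_{i=2}^{m}(-1)^{m-i}r^{-(m-i+1)}S_{p_1,p_2,i}^{+,+,+}$ by the very definition of the quadratic Euler sums, while the residual tail is
\[
\frac{(-1)^{m-1}}{r^{m-1}}\sum_{n=1}^{\infty}\frac{H_n^{(p_1)}H_n^{(p_2)}}{n(n+r)}=\frac{(-1)^{m-1}}{r^{m}}\sum_{n=1}^{\infty}\frac{rH_n^{(p_1)}H_n^{(p_2)}}{n(n+r)}.
\]
At this point Lemma 7 evaluates the last factor explicitly, and I only need to substitute that evaluation back in. Collecting the pieces yields the stated identity.

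For the second formula ($p_2$ replaced by $-p_2$), the plan is to insert Faulhaber's formula \eqref{ber} for $H_n^{(-p_2)}=\sum_{\ell=1}^{n}\ell^{p_2}$:
\[
H_n^{(-p_2)}=\frac{1}{p_2+1}\sum_{\ell=0}^{p_2}\binom{p_2+1}{\ell}B_{\ell}^{+}\,n^{p_2+1-\ell}.
\]
Substituting this into $T(p_1,-p_2,m,1,r,0)$ lets the $n^{p_2+1-\ell}$ factor cancel against the denominator, turning each summand into $\sum_n H_n^{(p_1)}/(n^{m-p_2-1+\ell}(n+r))$, which is exactly $S(p_1,m-p_2-1+\ell,1,r,0)$. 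Here the hypothesis $m\ge p_2+2$ is precisely what is needed to keep the smallest exponent $m-p_2-1$ at least $1$, so that every instance of $S(p_1,\cdot,1,r,0)$ supplied by Lemma 2 is legitimate.

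For the third formula ($-p_1$ and $-p_2$), the idea is the same but with Faulhaber applied to both numerator factors. Multiplying the two Faulhaber expansions gives $n^{p_1+p_2+2-\ell_1-\ell_2}$ in the numerator, which reduces $T(-p_1,-p_2,m,1,r,0)$ to a double sum of terms $\sum_n 1/(n^{m-p_1-p_2-2+\ell_1+\ell_2}(n+r))$. Since $H_n^{(0)}=n$ by the convention in Definition \ref{def2}, each such inner series equals $S(0,m-p_1-p_2-1+\ell_1+\ell_2,1,r,0)$, and the stated double-binomial/double-Bernoulli sum drops out. The hypothesis $m\ge p_1+p_2+3$ is exactly the condition that pushes the minimal index $m-p_1-p_2-1$ up to $2$, which is the threshold $t\ge 2$ required for $S(0,t,1,r,0)$ to converge.

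I do not expect any genuinely hard step here: the whole argument is bookkeeping driven by partial fractions, Faulhaber, and the already-established Lemmas 2 and 7. The only point demanding care is index tracking in the third formula, where the shift from $n^{p_1+p_2+2-\ell_1-\ell_2}/n^m$ to the exponent appearing inside $S(0,\cdot,1,r,0)$ involves a $-1$ correction coming from $H_n^{(0)}=n$, and where one must verify that the stated lower bound on $m$ matches the convergence requirement of the innermost $S$.
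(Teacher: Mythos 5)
Your proposal is correct and follows essentially the same route as the paper: partial fractions on $\frac{1}{n^m(n+r)}$ combined with Lemma \ref{lem7} for the all-positive case, and Faulhaber's formula \eqref{ber} applied to one or both factors, feeding the result into Lemma \ref{lem2}, for the other two cases. Your explicit tracking of the index shift coming from $H_n^{(0)}=n$ (so that $\sum_{n\ge 1} 1/\bigl(n^{m-p_1-p_2-2+\ell_1+\ell_2}(n+r)\bigr)$ is $S(0,m-p_1-p_2-1+\ell_1+\ell_2,1,r,0)$) and of the convergence thresholds behind $m\ge p_2+2$ and $m\ge p_1+p_2+3$ is a point the paper leaves implicit, but it is the same argument.
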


\begin{proof}
When $p_{1}, p_{2}, m, r \in \mathbb N$, we can obtain that
\begin{align*}
\sum_{n=1}^\infty\frac{H_n^{(p_{1})}H_n^{(p_{2})}}{n^{m}(n+r)}
&=\sum_{n=1}^\infty H_n^{(p_{1})}H_n^{(p_{2})}\left(\sum_{i=2}^{m}\frac{(-1)^{m-i}}{r^{m-i+1}n^{i}}
+\frac{(-1)^{m-1}}{r^{m-1}n(n+r)}\right)\\
&=\sum_{i=2}^{m}\frac{(-1)^{m-i}}{r^{m-i+1}}\sum_{n=1}^\infty \frac{H_n^{(p_{1})}H_n^{(p_{2})}}{n^{i}}
+\frac{(-1)^{m-1}}{r^{m-1}}\sum_{n=1}^\infty\frac{H_n^{(p_{1})}H_n^{(p_{2})}}{n(n+r)}\,.
\end{align*}
With the help of Lemma {\ref{lem7}}, we get the desired result.

When $p_{1}, m, r \in \mathbb N$, $p_{2} \in \mathbb N_{0}$ and $m \geq p_{2}+2$, we have
\begin{align*}
\sum_{n=1}^\infty\frac{H_n^{(p_{1})}H_n^{(-p_{2})}}{n^{m}(n+r)}
&=\sum_{n=1}^\infty\frac{H_n^{(p_{1})}\sum_{\ell=1}^{n}\ell^{p_{2}}}{n^{m}(n+r)}\\
&=\sum_{n=1}^\infty
\frac{H_n^{(p_{1})}\frac{1}{p_{2}+1}\sum_{\ell=0}^{p_{2}} \binom{p_{2}+1}{\ell}B_{\ell}^{+}n^{p_{2}+1-\ell}}{n^{m}(n+r)}\\
&=\frac{1}{p_{2}+1}\sum_{\ell=0}^{p_{2}} \binom{p_{2}+1}{\ell}B_{\ell}^{+} \sum_{n=1}^\infty \frac{H_n^{(p_{1})}}{n^{m-p_{2}-1+\ell}(n+r)}\,.
\end{align*}
With the help of Lemma {\ref{lem2}}, we get the desired result.

When $m, r \in \mathbb N$, $p_{1}, p_{2} \in \mathbb N_{0}$ and $m \geq p_{1}+p_{2}+3$, we have
\begin{align*}
&\quad \sum_{n=1}^\infty\frac{H_n^{(-p_{1})}H_n^{(-p_{2})}}{n^{m}(n+r)}\\
&=\sum_{n=1}^\infty\frac{\sum_{\ell_{1}=1}^{n}\ell^{p_{1}}\sum_{\ell_{2}=1}^{n}\ell^{p_{2}}}{n^{m}(n+r)}\\
&=\sum_{n=1}^\infty
\frac{\frac{1}{p_{1}+1}\sum_{\ell=0}^{p_{1}} \binom{p_{1}+1}{\ell}B_{\ell}^{+}n^{p_{1}+1-\ell}\frac{1}{p_{2}+1}\sum_{\ell=0}^{p_{2}} \binom{p_{2}+1}{\ell}B_{\ell}^{+}n^{p_{2}+1-\ell}}{n^{m}(n+r)}\\
&=\frac{1}{(p_{1}+1)(p_{2}+1)}\sum_{\ell_{1}=0}^{p_{1}} \sum_{\ell_{2}=0}^{p_{2}} \binom{p_{1}+1}{\ell_{1}}\binom{p_{2}+1}{\ell_{2}}\\
&\quad \times B_{\ell_{1}}^{+}B_{\ell_{2}}^{+} \sum_{n=1}^\infty \frac{1}{n^{m-p_{1}-p_{2}-2+\ell_{1}+\ell_{2}}(n+r)}\,.
\end{align*}
With the help of Lemma {\ref{lem2}}, we get the desired result.
\end{proof}

\begin{Lem}\label{lem9}
Let $p_{1}, p_{2}, r \in \mathbb N$, defining
\begin{align*}
T(p_{1},p_{2},r):=\sum_{n=1}^\infty \frac{(-1)^{n+1}H_n^{(p_{1})}H_n^{(p_{2})}}{n+r}\,,
\end{align*}
then we have
\begin{align*}
&T(p_{1},p_{2},r)\\
&=(-1)^{r}\left(S_{p_{1},p_{2},1}^{+,+,-}-S_{p_{1},p_{2}+1}^{+,-}-S_{p_{2},p_{1}+1}^{+,-}+\overline{\zeta}(p_{1}+p_{2}+1)\right)\\
&\quad +\sum_{j=1}^{r-1}(-1)^{r-1-j}\big(S(p_{1},p_{2},1,j,1)+S(p_{2},p_{1},1,j,1)\big)\\
&\quad +\sum_{j=1}^{r-1}(-1)^{r-j}S(0,p_{1}+p_{2}+1,1,j,1)\,.
\end{align*}
\end{Lem}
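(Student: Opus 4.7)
The plan is to mimic the index-shift recursion driving the proof of Lemma~\ref{lem3}, but applied simultaneously to both harmonic factors. I first substitute $n\mapsto n-1$ inside $T(p_1,p_2,r)$; since $H_0^{(p)}=0$ the resulting $n=1$ term is absent, so
\begin{align*}
T(p_1,p_2,r)=\sum_{n=1}^{\infty}\frac{(-1)^{n}H_{n-1}^{(p_1)}H_{n-1}^{(p_2)}}{n+r-1}.
\end{align*}
Expanding each factor as $H_{n-1}^{(p_i)}=H_n^{(p_i)}-1/n^{p_i}$ and distributing gives four summands. Three of these match quantities from Definition~\ref{def1}, and the remaining one is $-T(p_1,p_2,r-1)$. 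Using the convention $H_n^{(0)}=n$, the diagonal piece $\sum_{n\ge 1}(-1)^n/(n^{p_1+p_2}(n+r-1))$ is identified as $-S(0,p_1+p_2+1,1,r-1,1)$. Together this yields the first-order recursion
\begin{align*}
T(p_1,p_2,r)=-T(p_1,p_2,r-1)+A_{r-1},
\end{align*}
where $A_j:=S(p_1,p_2,1,j,1)+S(p_2,p_1,1,j,1)-S(0,p_1+p_2+1,1,j,1)$.

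I will then unroll this recursion down to the base case $r=1$, which gives
\begin{align*}
T(p_1,p_2,r)=(-1)^{r-1}T(p_1,p_2,1)+\sum_{j=1}^{r-1}(-1)^{r-1-j}A_j.
\end{align*}
Separating the three summands inside $A_j$ and using $-(-1)^{r-1-j}=(-1)^{r-j}$ on the $S(0,\cdots)$ contribution reproduces exactly the mixed signs $(-1)^{r-1-j}$ and $(-1)^{r-j}$ in the lemma.

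Finally I evaluate the base case $T(p_1,p_2,1)$ by applying the same index shift once more. This time the shift leaves a series with denominator $n$, and the four pieces obtained from expanding $(H_n^{(p_1)}-1/n^{p_1})(H_n^{(p_2)}-1/n^{p_2})$ identify directly as $-S_{p_1,p_2,1}^{+,+,-}$, $S_{p_1,p_2+1}^{+,-}$, $S_{p_2,p_1+1}^{+,-}$, and $-\overline{\zeta}(p_1+p_2+1)$, so multiplying by $(-1)^{r-1}$ produces the $(-1)^r$-prefactor in the statement. The main bookkeeping hurdle is tracking the alternating signs through the telescoping; convergence is not an issue because every manipulation is a finite reindexing of a conditionally convergent alternating series.
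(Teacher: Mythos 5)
Your proposal is correct and follows essentially the same route as the paper: the paper's ``change of counter'' is exactly your index shift, producing the same first-order recursion $T(p_{1},p_{2},r)=-T(p_{1},p_{2},r-1)+S(p_{1},p_{2},1,r-1,1)+S(p_{2},p_{1},1,r-1,1)-S(0,p_{1}+p_{2}+1,1,r-1,1)$, which is unrolled to $r=1$ with the base case $T(p_{1},p_{2},1)=-S_{p_{1},p_{2},1}^{+,+,-}+S_{p_{1},p_{2}+1}^{+,-}+S_{p_{2},p_{1}+1}^{+,-}-\overline{\zeta}(p_{1}+p_{2}+1)$ evaluated by the identical shift-and-expand step. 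All sign bookkeeping in your version, including the identification of the diagonal term via $H_n^{(0)}=n$, matches the paper's computation.
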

\begin{proof}
By a change of counter, we have
\begin{align*}
&\quad T(p_{1},p_{2},r)\\
&=\sum_{n=1}^\infty \frac{(-1)^{n+1}H_n^{(p_{1})}H_n^{(p_{2})}}{n+r}\\
&=\sum_{n=1}^\infty \frac{(-1)^{n}H_n^{(p_{1})}H_n^{(p_{2})}}{n+r-1}+\sum_{n=1}^\infty \frac{(-1)^{n+1}H_n^{(p_{1})}}{n^{p_{2}}(n+r-1)}+\sum_{n=1}^\infty \frac{(-1)^{n+1}H_n^{(p_{2})}}{n^{p_{1}}(n+r-1)}\\
&\quad +\sum_{n=1}^\infty \frac{(-1)^{n}}{n^{p_{1}+p_{2}}(n+r-1)}\\
&=-T(p_{1},p_{2},r-1)+S(p_{1},p_{2},1,r-1,1)+S(p_{2},p_{1},1,r-1,1)\\
&\quad -S(0,p_{1}+p_{2}+1,1,r-1,1)\\
&=(-1)^{r-1}T(p_{1},p_{2},1)+\sum_{j=1}^{r-1}(-1)^{r-1-j}S(p_{1},p_{2},1,j,1)\\
&\quad +\sum_{j=1}^{r-1}(-1)^{r-1-j}S(p_{2},p_{1},1,j,1)
+\sum_{j=1}^{r-1}(-1)^{r-j}S(0,p_{1}+p_{2}+1,1,j,1)\,.
\end{align*}
Since
\begin{align*}
T(p_{1},p_{2},1)
&=\sum_{n=1}^\infty \frac{(-1)^{n+1}H_n^{(p_{1})}H_n^{(p_{2})}}{n+1}\\
&=\sum_{n=1}^\infty \frac{(-1)^{n}H_n^{(p_{1})}H_n^{(p_{2})}}{n}+\sum_{n=1}^\infty \frac{(-1)^{n+1}H_n^{(p_{1})}}{n^{p_{2}+1}}\\
&\quad +\sum_{n=1}^\infty \frac{(-1)^{n+1}H_n^{(p_{2})}}{n^{p_{1}+1}}+\sum_{n=1}^\infty \frac{(-1)^{n}}{n^{p_{1}+p_{2}+1}}\\
&=-S_{p_{1},p_{2},1}^{+,+,-}+S_{p_{1},p_{2}+1}^{+,-}+S_{p_{2},p_{1}+1}^{+,-}-\overline{\zeta}(p_{1}+p_{2}+1)\,,
\end{align*}
we get the desired result.
\end{proof}
Note that, $T(1,1,1)$ has already been obtained by Sofo \cite{Sofo1}.

\begin{Lem}\label{lem10}
Let $p_{1}, p_{2}, r \in \mathbb N$ and $m \in \mathbb N_{0}$, then we have
\begin{align*}
&\quad T(p_{1},p_{2},m,1,r,1)\\
&=\sum_{i=1}^{m}\frac{(-1)^{m-i}}{r^{m-i+1}}S_{p_{1},p_{2},i}^{+,+,-}
+\frac{(-1)^{m+r}}{r^{m}}\left(S_{p_{1},p_{2},1}^{+,+,-}-S_{p_{1},p_{2}+1}^{+,-}-S_{p_{2},p_{1}+1}^{+,-}\right)\\
&\quad +\frac{(-1)^{m+r}}{r^{m}}\overline{\zeta}(p_{1}+p_{2}+1)
+\sum_{j=1}^{r-1}\frac{(-1)^{r+m-j}}{r^{m}}S(0,p_{1}+p_{2}+1,1,j,1)\\
&\quad +\sum_{j=1}^{r-1}\frac{(-1)^{r+m-1-j}}{r^{m}}\big(S(p_{1},p_{2},1,j,1)+S(p_{2},p_{1},1,j,1)\big)\,.
\end{align*}
Let $p_{1}, m, r \in \mathbb N$, $p_{2} \in \mathbb N_{0}$ and $m \geq p_{2}+1$, then we have
\begin{align*}
&T(p_{1},-p_{2},m,1,r,1)\\
&=\frac{1}{p_{2}+1}\sum_{\ell=0}^{p_{2}} \binom{p_{2}+1}{\ell}B_{\ell}^{+}S(p_{1},m-p_{2}-1+\ell,1,r,1)\,.
\end{align*}
Let $m, r \in \mathbb N$, $p_{1}, p_{2} \in \mathbb N_{0}$ and $m \geq p_{1}+p_{2}+2$, then we have
\begin{align*}
T(-p_{1},-p_{2},m,1,r,1)&=\frac{1}{(p_{1}+1)(p_{2}+1)}\sum_{\ell_{1}=0}^{p_{1}} \sum_{\ell_{2}=0}^{p_{2}} \binom{p_{1}+1}{\ell_{1}}\binom{p_{2}+1}{\ell_{2}}\\
&\quad \times B_{\ell_{1}}^{+}B_{\ell_{2}}^{+}S(0,m-p_{1}-p_{2}-1+\ell_{1}+\ell_{2},1,r,1)\,.
\end{align*}
\end{Lem}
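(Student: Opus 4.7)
The strategy is to mirror the proof of Lemma \ref{lem8}, replacing each non-alternating building block with its alternating counterpart. The central tool is the partial fraction expansion
\begin{align*}
\frac{1}{n^{m}(n+r)} = \sum_{i=1}^{m}\frac{(-1)^{m-i}}{r^{m-i+1}}\cdot\frac{1}{n^{i}} + \frac{(-1)^{m}}{r^{m}}\cdot\frac{1}{n+r}\,,
\end{align*}
already employed in Lemma \ref{lem4}. Note that the sum now starts at $i=1$ rather than $i=2$, because the individual alternating pieces $\sum(-1)^{n+1}/n$ converge and so each term may be split off safely.

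For the first part, I multiply the expansion by $(-1)^{n+1}H_n^{(p_{1})}H_n^{(p_{2})}$ and sum over $n$. The first group of terms produces exactly $\sum_{i=1}^{m}\frac{(-1)^{m-i}}{r^{m-i+1}}S^{+,+,-}_{p_{1},p_{2},i}$. The second group yields $\frac{(-1)^{m}}{r^{m}}T(p_{1},p_{2},r)$, into which I substitute the closed form supplied by Lemma \ref{lem9}. Careful bookkeeping of the combined prefactor $(-1)^{m+r}$ and of the telescoping index $j$ coming from Lemma \ref{lem9} reproduces the displayed formula; this sign accounting is the most delicate step.

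For the mixed case $T(p_{1},-p_{2},m,1,r,1)$, I invoke Faulhaber's formula (\ref{ber}) to write $H_n^{(-p_{2})}=\frac{1}{p_{2}+1}\sum_{\ell=0}^{p_{2}}\binom{p_{2}+1}{\ell}B_{\ell}^{+}n^{p_{2}+1-\ell}$, cancel the appropriate power of $n$ with the denominator $n^{m}$, and recognize each resulting inner series as $S(p_{1},m-p_{2}-1+\ell,1,r,1)$ from Definition \ref{def1}. The stated hypothesis $m\geq p_{2}+1$ is precisely what guarantees that every shifted exponent $m-p_{2}-1+\ell$ is admissible for Lemma \ref{lem4}.

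For the fully negative case $T(-p_{1},-p_{2},m,1,r,1)$, I apply Faulhaber's formula simultaneously to both $H_n^{(-p_{1})}$ and $H_n^{(-p_{2})}$, producing a double sum over $\ell_{1},\ell_{2}$. The two polynomial factors multiply to $n^{p_{1}+p_{2}+2-\ell_{1}-\ell_{2}}$, and after cancellation one identifies the remaining series with $S(0,m-p_{1}-p_{2}-1+\ell_{1}+\ell_{2},1,r,1)$ via the convention $H_n^{(0)}=n$, which absorbs one factor of $n$ and matches the format of Definition \ref{def1}. The condition $m\geq p_{1}+p_{2}+2$ ensures admissibility at the worst case $\ell_{1}=\ell_{2}=0$, so Lemma \ref{lem4} applies term by term. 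The main obstacle throughout is the sign tracking in part one; parts two and three are essentially routine substitutions of Faulhaber's formula into Lemma \ref{lem4}.
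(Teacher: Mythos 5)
Your proposal is correct and follows essentially the same route as the paper's own proof: the partial fraction expansion of $1/(n^{m}(n+r))$ with the $i=1$ term retained, substitution of Lemma \ref{lem9} for the residual series $\frac{(-1)^{m}}{r^{m}}T(p_{1},p_{2},r)$ in the first part, and Faulhaber's formula (\ref{ber}) feeding into Lemma \ref{lem4} for the two negative-order cases, including the observation that the convention $H_n^{(0)}=n$ reconciles the exponent shift with $S(0,m-p_{1}-p_{2}-1+\ell_{1}+\ell_{2},1,r,1)$. The sign bookkeeping you flag as delicate does check out: multiplying Lemma \ref{lem9}'s closed form by $(-1)^{m}/r^{m}$ yields exactly the prefactors $(-1)^{m+r}$, $(-1)^{r+m-1-j}$ and $(-1)^{r+m-j}$ in the stated identity.
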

\begin{proof}
When $p_{1}, p_{2}, r \in \mathbb N$ and $m \in \mathbb N_{0}$, we can obtain that
\begin{align*}
&\quad \sum_{n=1}^\infty\frac{(-1)^{n+1}H_n^{(p_{1})}H_n^{(p_{2})}}{n^{m}(n+r)}\\
&=\sum_{n=1}^\infty (-1)^{n+1}H_n^{(p_{1})}H_n^{(p_{2})}\left(\sum_{i=1}^{m}\frac{(-1)^{m-i}}{r^{m-i+1}n^{i}}
+\frac{(-1)^{m}}{r^{m}(n+r)}\right)\\
&=\sum_{i=1}^{m}\frac{(-1)^{m-i}}{r^{m-i+1}}\sum_{n=1}^\infty \frac{(-1)^{n+1}H_n^{(p_{1})}H_n^{(p_{2})}}{n^{i}}
+\frac{(-1)^{m}}{r^{m}}\sum_{n=1}^\infty\frac{(-1)^{n+1}H_n^{(p_{1})}H_n^{(p_{2})}}{n+r}\,.
\end{align*}
With the help of Lemma {\ref{lem9}}, we get the desired result.

When $p_{1}, m, r \in \mathbb N$, $p_{2} \in \mathbb N_{0}$ and $m \geq p_{2}+1$, we have
\begin{align*}
\sum_{n=1}^\infty\frac{(-1)^{n+1}H_n^{(p_{1})}H_n^{(-p_{2})}}{n^{m}(n+r)}
&=\sum_{n=1}^\infty\frac{(-1)^{n+1}H_n^{(p_{1})}\sum_{\ell=1}^{n}\ell^{p_{2}}}{n^{m}(n+r)}\\
&=\sum_{n=1}^\infty
\frac{(-1)^{n+1}H_n^{(p_{1})}\frac{1}{p_{2}+1}\sum_{\ell=0}^{p_{2}} \binom{p_{2}+1}{\ell}B_{\ell}^{+}n^{p_{2}+1-\ell}}{n^{m}(n+r)}\\
&=\frac{1}{p_{2}+1}\sum_{\ell=0}^{p_{2}} \binom{p_{2}+1}{\ell}B_{\ell}^{+} \sum_{n=1}^\infty \frac{(-1)^{n+1}H_n^{(p_{1})}}{n^{m-p_{2}-1+\ell}(n+r)}\,.
\end{align*}
With the help of Lemma {\ref{lem4}}, we get the desired result.

When $m, r \in \mathbb N$, $p_{1}, p_{2} \in \mathbb N_{0}$ and $m \geq p_{1}+p_{2}+2$, we have
\begin{align*}
&\quad \sum_{n=1}^\infty\frac{(-1)^{n+1}H_n^{(-p_{1})}H_n^{(-p_{2})}}{n^{m}(n+r)}\\
&=\sum_{n=1}^\infty\frac{(-1)^{n+1}\sum_{\ell_{1}=1}^{n}\ell^{p_{1}}\sum_{\ell_{2}=1}^{n}\ell^{p_{2}}}{n^{m}(n+r)}\\
&=\sum_{n=1}^\infty
\frac{(-1)^{n+1}\frac{1}{p_{1}+1}\sum_{\ell=0}^{p_{1}} \binom{p_{1}+1}{\ell}B_{\ell}^{+}n^{p_{1}+1-\ell}\frac{1}{p_{2}+1}\sum_{\ell=0}^{p_{2}} \binom{p_{2}+1}{\ell}B_{\ell}^{+}n^{p_{2}+1-\ell}}{n^{m}(n+r)}\\
&=\frac{1}{(p_{1}+1)(p_{2}+1)}\sum_{\ell_{1}=0}^{p_{1}} \sum_{\ell_{2}=0}^{p_{2}} \binom{p_{1}+1}{\ell_{1}}\binom{p_{2}+1}{\ell_{2}}\\
&\quad \times B_{\ell_{1}}^{+}B_{\ell_{2}}^{+} \sum_{n=1}^\infty \frac{(-1)^{n+1}}{n^{m-p_{1}-p_{2}-2+\ell_{1}+\ell_{2}}(n+r)}\,.
\end{align*}
With the help of Lemma {\ref{lem4}}, we get the desired result.
\end{proof}

Now we are able to prove our main theorems of this section.
\begin{theorem}\label{maintheorem3}
Let $s_{1}, s_{2}, p_{1}, p_{2}, m, k \in \mathbb N$ with $m \geq s_{1}+s_{2}-1$, then we have
\begin{align*}
&\quad \sum_{n=1}^\infty\frac{H_n^{(p_{1},s_{1})}H_n^{(p_{2},s_{2})}}{n^{m}\binom{n+k}{k}}\\
&=\sum_{\ell_{1}=0}^{s_{1}-1}\sum_{t_{1}=0}^{s_{1}-1-\ell_{1}}
\sum_{\ell_{2}=0}^{s_{2}-1}\sum_{t_{2}=0}^{s_{2}-1-\ell_{2}}a(s_{1},\ell_{1},t_{1})a(s_{2},\ell_{2},t_{2})\\
&\quad \times \sum_{r=1}^{k}(-1)^{r+1} r \binom{k}{r} T(p_{1}-\ell_{1},p_{2}-\ell_{2},m-t_{1}-t_{2},1,r,0)\,,
\end{align*}
where $a(s,\ell_{x},t_{x}), x=1,2$ are given in Lemma \ref{lem5} and $T(p_{1}-\ell_{1},p_{2}-\ell_{2},m-t_{1}-t_{2},1,r,0)$ is given in Lemma \ref{lem8}.
Therefore generalized hyperharmonic number sum
$$
 \sum_{n=1}^\infty\frac{H_n^{(p_{1},s_{1})}H_n^{(p_{2},s_{2})}}{n^{m}\binom{n+k}{k}}
$$
can be expressed in terms of classical Euler sums, zeta values and generalized harmonic numbers.
\end{theorem}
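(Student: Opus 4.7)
The plan is to follow the same template used for Theorems \ref{maintheorem1} and \ref{maintheorem2}, applying Lemma \ref{lem5} twice in order to reduce the quadratic generalized hyperharmonic factors to products of classical harmonic numbers, and then swapping the order of summation against the binomial partial fraction
$$
\frac{1}{\binom{n+k}{k}}=\sum_{r=1}^{k}(-1)^{r+1}r\binom{k}{r}\frac{1}{n+r}\,.
$$

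First I would apply Lemma \ref{lem5} to each of $H_n^{(p_1,s_1)}$ and $H_n^{(p_2,s_2)}$ separately. This yields
$$
H_n^{(p_1,s_1)}H_n^{(p_2,s_2)}=\sum_{\ell_1=0}^{s_1-1}\sum_{t_1=0}^{s_1-1-\ell_1}\sum_{\ell_2=0}^{s_2-1}\sum_{t_2=0}^{s_2-1-\ell_2} a(s_1,\ell_1,t_1)a(s_2,\ell_2,t_2)\,n^{t_1+t_2}H_n^{(p_1-\ell_1)}H_n^{(p_2-\ell_2)}\,.
$$
Dividing by $n^{m}\binom{n+k}{k}$ and summing over $n\ge 1$, the factor $n^{t_1+t_2}$ combines with $n^{-m}$ to give $n^{-(m-t_1-t_2)}$. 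The hypothesis $m\ge s_1+s_2-1$ guarantees $m-t_1-t_2\ge 1$ for every admissible pair $(t_1,t_2)$, which is exactly what is needed so that the exponent on $n$ in the inner sums remains in the range where the auxiliary quantities $T(\cdot,\cdot,\cdot,1,r,0)$ are defined.

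Next I would insert the binomial partial fraction expansion and interchange the (finite) sum over $r$ with the series over $n$. The inner series then becomes
$$
\sum_{n=1}^\infty\frac{H_n^{(p_1-\ell_1)}H_n^{(p_2-\ell_2)}}{n^{m-t_1-t_2}(n+r)}=T(p_1-\ell_1,p_2-\ell_2,m-t_1-t_2,1,r,0),
$$
which is precisely the quantity evaluated in Lemma \ref{lem8} (with the appropriate branch of that lemma used depending on whether $p_1-\ell_1$ and $p_2-\ell_2$ are positive, zero or negative; the convention on $H_n^{(-p)}$ from Definition \ref{def2} is compatible with the conventions in Lemma \ref{lem5}). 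Collecting all the pieces gives the stated identity. The last sentence of the theorem then follows because Lemma \ref{lem8} expresses every $T(\cdot,\cdot,\cdot,1,r,0)$ in terms of the classical quadratic and linear Euler sums $S_{p_1,p_2,i}^{+,+,+}$, $S_{p,i}^{+,+}$, the auxiliary quantities $S(p,m,1,b,0)$ (handled by Lemma \ref{lem2}), zeta values, and generalized harmonic numbers.

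The main obstacles are bookkeeping rather than conceptual: one must verify absolute convergence in order to legitimately swap the (finite) sums over $\ell_i,t_i,r$ with the infinite sum over $n$, which follows from $m-t_1-t_2\ge 1$ together with the fact that $H_n^{(p_i-\ell_i)}=O(n^{\max(0,-p_i+\ell_i+1)}\log n)$; and one must carefully select the correct case of Lemma \ref{lem8} according to the sign of $p_1-\ell_1$ and $p_2-\ell_2$, checking that the relevant inequalities $m-t_1-t_2\ge (p_2-\ell_2)+2$ or $m-t_1-t_2\ge (p_1-\ell_1)+(p_2-\ell_2)+3$ hold whenever those cases are invoked. The derivation is otherwise a direct imitation of the proofs of Theorems \ref{maintheorem1} and \ref{maintheorem2}.
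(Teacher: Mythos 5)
Your proposal is correct and follows essentially the same route as the paper's own proof: apply Lemma \ref{lem5} to each hyperharmonic factor, expand $1/\binom{n+k}{k}$ via the partial fraction identity $\sum_{r=1}^{k}(-1)^{r+1}r\binom{k}{r}\frac{1}{n+r}$, interchange the finite and infinite sums, and identify the inner series as $T(p_{1}-\ell_{1},p_{2}-\ell_{2},m-t_{1}-t_{2},1,r,0)$ from Lemma \ref{lem8}. Your added remarks on convergence and on selecting the correct branch of Lemma \ref{lem8} are sensible bookkeeping that the paper leaves implicit, but they do not change the argument.
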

\begin{proof}
By using Lemma \ref{lem5}, we have
\begin{align*}
&\quad \sum_{n=1}^\infty\frac{H_n^{(p_{1},s_{1})}H_n^{(p_{2},s_{2})}}{n^{m}\binom{n+k}{k}}\\
&=\sum_{\ell_{1}=0}^{s_{1}-1}\sum_{t_{1}=0}^{s_{1}-1-\ell_{1}}
\sum_{\ell_{2}=0}^{s_{2}-1}\sum_{t_{2}=0}^{s_{2}-1-\ell_{2}}a(s_{1},\ell_{1},t_{1})a(s_{2},\ell_{2},t_{2})
\sum_{n=1}^\infty\frac{H_n^{(p_{1}-\ell_{1})}H_n^{(p_{2}-\ell_{2})}}{n^{m-t_{1}-t_{2}}\binom{n+k}{k}}\\
&=\sum_{\ell_{1}=0}^{s_{1}-1}\sum_{t_{1}=0}^{s_{1}-1-\ell_{1}}
\sum_{\ell_{2}=0}^{s_{2}-1}\sum_{t_{2}=0}^{s_{2}-1-\ell_{2}}a(s_{1},\ell_{1},t_{1})a(s_{2},\ell_{2},t_{2})\\
&\quad \times \sum_{n=1}^\infty\frac{H_n^{(p_{1}-\ell_{1})}H_n^{(p_{2}-\ell_{2})}}{n^{m-t_{1}-t_{2}}}\sum_{r=1}^{k}(-1)^{r+1} r \binom{k}{r}\frac{1}{n+r}\\
&=\sum_{\ell_{1}=0}^{s_{1}-1}\sum_{t_{1}=0}^{s_{1}-1-\ell_{1}}
\sum_{\ell_{2}=0}^{s_{2}-1}\sum_{t_{2}=0}^{s_{2}-1-\ell_{2}}a(s_{1},\ell_{1},t_{1})a(s_{2},\ell_{2},t_{2})\\
&\quad \times \sum_{r=1}^{k}(-1)^{r+1} r \binom{k}{r} \sum_{n=1}^\infty\frac{H_n^{(p_{1}-\ell_{1})}H_n^{(p_{2}-\ell_{2})}}{n^{m-t_{1}-t_{2}}(n+r)}\\
&=\sum_{\ell_{1}=0}^{s_{1}-1}\sum_{t_{1}=0}^{s_{1}-1-\ell_{1}}
\sum_{\ell_{2}=0}^{s_{2}-1}\sum_{t_{2}=0}^{s_{2}-1-\ell_{2}}a(s_{1},\ell_{1},t_{1})a(s_{2},\ell_{2},t_{2})\\
&\quad \times \sum_{r=1}^{k}(-1)^{r+1} r \binom{k}{r}T(p_{1}-\ell_{1},p_{2}-\ell_{2},m-t_{1}-t_{2},1,r,0)\,.
\end{align*}
\end{proof}

\begin{theorem}\label{maintheorem4}
Let $s_{1}, s_{2}, p_{1}, p_{2}, m, k \in \mathbb N$ with $m \geq s_{1}+s_{2}-1$, then we have
\begin{align*}
&\quad \sum_{n=1}^\infty\frac{(-1)^{n+1}H_n^{(p_{1},s_{1})}H_n^{(p_{2},s_{2})}}{n^{m}\binom{n+k}{k}}\\
&=\sum_{\ell_{1}=0}^{s_{1}-1}\sum_{t_{1}=0}^{s_{1}-1-\ell_{1}}
\sum_{\ell_{2}=0}^{s_{2}-1}\sum_{t_{2}=0}^{s_{2}-1-\ell_{2}}a(s_{1},\ell_{1},t_{1})a(s_{2},\ell_{2},t_{2})\\
&\quad \times \sum_{r=1}^{k}(-1)^{r+1} r \binom{k}{r} T(p_{1}-\ell_{1},p_{2}-\ell_{2},m-t_{1}-t_{2},1,r,1)\,,
\end{align*}
where $a(s,\ell_{x},t_{x}), x=1,2$ are given in Lemma \ref{lem5} and $T(p_{1}-\ell_{1},p_{2}-\ell_{2},m-t_{1}-t_{2},1,r,1)$ is given in Lemma \ref{lem10}.
Therefore generalized hyperharmonic number sum
$$
 \sum_{n=1}^\infty\frac{(-1)^{n+1}H_n^{(p_{1},s_{1})}H_n^{(p_{2},s_{2})}}{n^{m}\binom{n+k}{k}}
$$
can be expressed in terms of classical (alternating) Euler sums, zeta values and generalized (alternating) harmonic numbers.
\end{theorem}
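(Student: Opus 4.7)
The plan is to mirror the proof of Theorem \ref{maintheorem3} almost verbatim, simply replacing the non-alternating ingredients $T(\cdot,\cdot,\cdot,1,r,0)$ with the alternating ones $T(\cdot,\cdot,\cdot,1,r,1)$ furnished by Lemma \ref{lem10}. The structural tools required are Lemma \ref{lem5}, Lemma \ref{lem10}, and the standard partial fraction expansion
\begin{align*}
\frac{1}{\binom{n+k}{k}} = \sum_{r=1}^{k}(-1)^{r+1} r \binom{k}{r}\frac{1}{n+r},
\end{align*}
which is the same identity already driving the proof of Theorem \ref{maintheorem3}.

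The key steps, in order, are as follows. First I would apply Lemma \ref{lem5} twice to write
\begin{align*}
H_n^{(p_1,s_1)} H_n^{(p_2,s_2)}
= \sum_{\ell_1=0}^{s_1-1}\sum_{t_1=0}^{s_1-1-\ell_1}\sum_{\ell_2=0}^{s_2-1}\sum_{t_2=0}^{s_2-1-\ell_2}
a(s_1,\ell_1,t_1)\,a(s_2,\ell_2,t_2)\, n^{t_1+t_2}\, H_n^{(p_1-\ell_1)} H_n^{(p_2-\ell_2)}.
\end{align*}
Next I would multiply through by $(-1)^{n+1}/(n^m \binom{n+k}{k})$, sum over $n$, insert the partial fraction expansion above, and interchange the finite sums over $\ell_1,t_1,\ell_2,t_2,r$ with the infinite sum in $n$. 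The resulting inner series is, by Definition \ref{def2},
\begin{align*}
\sum_{n=1}^\infty \frac{(-1)^{n+1} H_n^{(p_1-\ell_1)} H_n^{(p_2-\ell_2)}}{n^{m-t_1-t_2}(n+r)} = T(p_1-\ell_1,\,p_2-\ell_2,\,m-t_1-t_2,\,1,\,r,\,1),
\end{align*}
which matches the displayed closed form of the theorem.

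I do not expect a genuine obstacle; the only bookkeeping point is to confirm that the hypothesis $m \geq s_1+s_2-1$ keeps every inner series absolutely convergent and puts each triple $(p_1-\ell_1,p_2-\ell_2,m-t_1-t_2)$ into one of the three regimes of Lemma \ref{lem10}. Since $t_1+t_2 \leq (s_1-1)+(s_2-1)$, we have $m-t_1-t_2 \geq 1$, so the alternating factor $(-1)^{n+1}$ together with the $(n+r)$ factor makes each inner series absolutely convergent, and the first case of Lemma \ref{lem10} (which only requires $m \in \mathbb N_0$) applies whenever $p_1-\ell_1,p_2-\ell_2 \geq 1$; when either index collapses to zero or goes negative, the Bernoulli-polynomial cases of Lemma \ref{lem10} take over, which is precisely why the $a(s,\ell,t)$ coefficients are allowed to pick up negative upper indices through Faulhaber's formula. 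Substituting the explicit formulas of Lemma \ref{lem10} into the resulting finite linear combination then expresses the whole sum in terms of classical (alternating) Euler sums, zeta values, and generalized (alternating) harmonic numbers, completing the proof.
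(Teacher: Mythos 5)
Your proposal matches the paper's proof essentially verbatim: both apply Lemma \ref{lem5} to each hyperharmonic factor, insert the expansion $\frac{1}{\binom{n+k}{k}}=\sum_{r=1}^{k}(-1)^{r+1}r\binom{k}{r}\frac{1}{n+r}$, interchange the finite sums with the sum over $n$, and identify the inner series as $T(p_{1}-\ell_{1},p_{2}-\ell_{2},m-t_{1}-t_{2},1,r,1)$, evaluated by Lemma \ref{lem10}. Your closing remarks on convergence and on the negative upper indices being absorbed by the Bernoulli--Faulhaber cases of Lemma \ref{lem10} are correct bookkeeping that the paper leaves implicit.
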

\begin{proof}
By using Lemma \ref{lem5}, we have
\begin{align*}
&\quad \sum_{n=1}^\infty\frac{(-1)^{n+1}H_n^{(p_{1},s_{1})}H_n^{(p_{2},s_{2})}}{n^{m}\binom{n+k}{k}}\\
&=\sum_{\ell_{1}=0}^{s_{1}-1}\sum_{t_{1}=0}^{s_{1}-1-\ell_{1}}
\sum_{\ell_{2}=0}^{s_{2}-1}\sum_{t_{2}=0}^{s_{2}-1-\ell_{2}}a(s_{1},\ell_{1},t_{1})a(s_{2},\ell_{2},t_{2})
\sum_{n=1}^\infty\frac{(-1)^{n+1}H_n^{(p_{1}-\ell_{1})}H_n^{(p_{2}-\ell_{2})}}{n^{m-t_{1}-t_{2}}\binom{n+k}{k}}\\
&=\sum_{\ell_{1}=0}^{s_{1}-1}\sum_{t_{1}=0}^{s_{1}-1-\ell_{1}}
\sum_{\ell_{2}=0}^{s_{2}-1}\sum_{t_{2}=0}^{s_{2}-1-\ell_{2}}a(s_{1},\ell_{1},t_{1})a(s_{2},\ell_{2},t_{2})\\
&\quad \times \sum_{n=1}^\infty\frac{(-1)^{n+1}H_n^{(p_{1}-\ell_{1})}H_n^{(p_{2}-\ell_{2})}}{n^{m-t_{1}-t_{2}}}\sum_{r=1}^{k}(-1)^{r+1} r \binom{k}{r}\frac{1}{n+r}\\
&=\sum_{\ell_{1}=0}^{s_{1}-1}\sum_{t_{1}=0}^{s_{1}-1-\ell_{1}}
\sum_{\ell_{2}=0}^{s_{2}-1}\sum_{t_{2}=0}^{s_{2}-1-\ell_{2}}a(s_{1},\ell_{1},t_{1})a(s_{2},\ell_{2},t_{2})\\
&\quad \times \sum_{r=1}^{k}(-1)^{r+1} r \binom{k}{r} \sum_{n=1}^\infty\frac{(-1)^{n+1}H_n^{(p_{1}-\ell_{1})}H_n^{(p_{2}-\ell_{2})}}{n^{m-t_{1}-t_{2}}(n+r)}\\
&=\sum_{\ell_{1}=0}^{s_{1}-1}\sum_{t_{1}=0}^{s_{1}-1-\ell_{1}}
\sum_{\ell_{2}=0}^{s_{2}-1}\sum_{t_{2}=0}^{s_{2}-1-\ell_{2}}a(s_{1},\ell_{1},t_{1})a(s_{2},\ell_{2},t_{2})\\
&\quad \times \sum_{r=1}^{k}(-1)^{r+1} r \binom{k}{r}T(p_{1}-\ell_{1},p_{2}-\ell_{2},m-t_{1}-t_{2},1,r,1)\,.
\end{align*}
\end{proof}

\section{Some interesting integrals}
De Doelder \cite{Doelder} gave the following integral:
\begin{align*}
\int_{0}^{\frac{\pi}{2}}\frac{\phi^{2}}{\sin \phi}\mathrm{d}\phi=-\frac{7}{2}\zeta(3)+2\pi G\,,
\end{align*}
where $G$ is the famous Catalan's constant defined as
$$
G:=\sum_{n=1}^\infty \frac{(-1)^{n-1}}{(2n-1)^{2}}\,.
$$

Consider the complex function $f(z)=\log^{2} z/(z^{2}-1)$ and to integrate $f$ in positive sense along the contour given by $0 < \delta \leq x \leq 1$; $z=e^{i\delta}, 0 \leq \phi \leq \frac{\pi}{2}$; $1 \geq y \geq \delta >0$ and $z=\delta e^{i\phi}, \frac{\pi}{2} \geq \phi \geq 0$.

Within this contour there are no singularities of $f$ and by the Cauchy residue theorem we have
\begin{align*}
&\lim_{\delta \to 0}\bigg(-\int_{\delta}^{1}\frac{\log^{2} x}{1-x^2}\mathrm{d}x
-\int_{0}^{\frac{\pi}{2}}\frac{\phi^{2}}{2\sin \phi}\mathrm{d}\phi
+i\int_{\delta}^{1}\frac{(\log y+\frac{1}{2}\pi i)^{2}}{1+y^2}\mathrm{d}y\\
&\qquad \qquad\qquad\qquad\qquad\qquad\qquad\qquad+i\int_{\frac{\pi}{2}}^{0}\frac{(\log \delta+i\phi)^{2}}{(\delta e^{i\phi})^{2}-1}\delta e^{i\phi}\mathrm{d}\phi\bigg)=0\,,
\end{align*}
Comparing the real part and the imaginary part on both sides, we have
\begin{align*}
&\int_{0}^{\frac{\pi}{2}}\frac{\phi^{2}}{\sin \phi}\mathrm{d}\phi
=2\bigg(\int_{0}^{1}\frac{\log^{2} x}{x^2-1}\mathrm{d}x-\pi \int_{0}^{1}\frac{\log y}{1+y^2}\mathrm{d}y\bigg)\,,\\
&\int_{0}^{1}\frac{\log^{2} y}{1+y^2}\mathrm{d}y=\frac{\pi^2}{4}\int_{0}^{1}\frac{1}{1+y^2}\mathrm{d}y=\frac{\pi^3}{16}\,.
\end{align*}
It is known (see \cite{Doelder}) that $\int_{0}^{1}\frac{\log^{2} x}{x^2-1}\mathrm{d}x=-\frac{7}{4}\zeta(3)$ and $\int_{0}^{1}\frac{\log y}{1+y^2}=-G$, so we get the valuation of the integral $\int_{0}^{\frac{\pi}{2}}\frac{\phi^{2}}{\sin \phi}\mathrm{d}\phi$.

De Doelder \cite{Doelder} also considered the function $g(z)=\log z/(z^{2}-1)$ along the same contour. Then the following results could be established:
\begin{align*}
&\int_{0}^{\frac{\pi}{2}}\frac{\phi}{\sin \phi}\mathrm{d}\phi=-2\int_{0}^{1}\frac{\log y}{1+y^2}\mathrm{d}y
=2G\,,\\
&\int_{0}^{1}\frac{\log x}{1+x^2}\mathrm{d}x=\frac{\pi}{2}\int_{0}^{1}\frac{1}{1+y^2}\mathrm{d}y=\frac{\pi^2}{8}\,.
\end{align*}

We now consider the function $f(z)=-\log z/(2-z)$ along the same contour, since within this contour there are no singularities, by using the Cauchy residue theorem we can obtain that
\begin{align*}
&\lim_{\delta \to 0}\bigg(-\int_{\delta}^{1}\frac{\log x}{2-x}\mathrm{d}x
+\int_{0}^{\frac{\pi}{2}}\frac{\phi e^{i\phi}}{2-e^{i\phi}}\mathrm{d}\phi
+\int_{\delta}^{1}\frac{i(2\log y-\frac{\pi}{2}y)-(\pi+y\log y)}{4+y^2}\\
&\qquad\qquad\qquad\qquad\qquad\qquad\qquad\qquad\qquad+\int_{0}^{\frac{\pi}{2}}\frac{i\delta e^{i\phi}(\log \delta+i\phi)}{2-\delta e^{i\phi}}\mathrm{d}\phi\bigg)=0\,.
\end{align*}
It follows that
\begin{align*}
&\int_{0}^{\frac{\pi}{2}}\frac{\phi(2\cos \phi-1)}{5-4\sin \phi}\mathrm{d}\phi=\int_{0}^{1}\frac{\log x}{2-x}\mathrm{d}x+\int_{0}^{1}\frac{\pi +y\log y}{4+y^2}\mathrm{d}y\,,\\
&\int_{0}^{\frac{\pi}{2}}\frac{2\phi\sin \phi}{5-4\sin \phi}\mathrm{d}\phi=\int_{0}^{1}\frac{\frac{\pi}{2}y-2\log y}{4+y^2}\mathrm{d}y\,.
\end{align*}
By a change of variable, we have
\begin{align*}
&\int_{0}^{1}\frac{\log x}{2-x}\mathrm{d}x=\int_{0}^{1}\frac{\log (1-x)}{1+x}\mathrm{d}x=\frac{1}{2}\log^{2}2-\frac{1}{2}\zeta(2)\quad (\cite[p.153]{Sofo1})\,,\\
&\int_{0}^{1}\frac{\pi}{4+y^2}\mathrm{d}y=\frac{\pi}{2}\int_{0}^{\frac{1}{2}}\frac{1}{1+x^2}\mathrm{d}x
=\frac{\pi}{2}\arctan{\frac{1}{2}}\,,\\
&\int_{0}^{1}\frac{y\log y}{4+y^2}\mathrm{d}y=\int_{0}^{\frac{1}{2}}\frac{y(\log y+\log 2)}{1+y^2}\mathrm{d}y\\
&\qquad \qquad \qquad=\frac{1}{2}\log2\log\frac{5}{4}+\log \frac{1}{2}\sum_{n=1}^\infty\frac{(-1)^{n-1}(\frac{1}{2})^{2n}}{2n}
-\sum_{n=1}^\infty\frac{(-1)^{n-1}(\frac{1}{2})^{2n}}{4n^2}\\
&\qquad \qquad \qquad=\frac{1}{4}Li_{2}(-\frac{1}{4})\,,\\
&\int_{0}^{1}\frac{\frac{\pi}{2}y}{4+y^2}\mathrm{d}y
=\frac{\pi}{4}\int_{0}^{1}\frac{1}{4+y}\mathrm{d}y=\frac{\pi}{4}\log\frac{5}{4}\,,\\
&\int_{0}^{1}\frac{2\log y}{4+y^2}\mathrm{d}y=\int_{0}^{\frac{1}{2}}\frac{\log y+\log 2}{1+y^2}\mathrm{d}y\\
&\qquad \qquad \qquad =\log\frac{1}{2}\arctan\frac{1}{2}-\int_{0}^{\frac{1}{2}}\frac{\arctan y}{y}\mathrm{d}y+\log 2\arctan\frac{1}{2}\\
&\qquad \qquad \qquad =-Ti_{2}(\frac{1}{2})\,,
\end{align*}
where we have used the inverse tangent integral $Ti_{2}(x):=\int_{0}^{x}\frac{\arctan y}{y}\mathrm{d}y$.

Combining the above results, we have the following proposition:
\begin{Prop}
\begin{align*}
&\int_{0}^{\frac{\pi}{2}}\frac{\phi(2\cos \phi-1)}{5-4\sin \phi}\mathrm{d}\phi=-\frac{1}{12}\pi^{2}+\frac{1}{2}\log^{2}2+\frac{\pi}{2}\arctan{\frac{1}{2}}+\frac{1}{4}Li_{2}(-\frac{1}{4})\,,\\
&\int_{0}^{\frac{\pi}{2}}\frac{2\phi\sin \phi}{5-4\sin \phi}\mathrm{d}\phi=\frac{\pi}{4}\log\frac{5}{4}+Ti_{2}(\frac{1}{2})\,.
\end{align*}
\end{Prop}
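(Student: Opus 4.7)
The plan is to read the proposition as simple assembly: all the heavy lifting has already been done in the preceding contour-integral analysis, and the proposition just records the outcome after substituting the closed-form values of the auxiliary real integrals. Concretely, the contour integral of $f(z)=-\log z/(2-z)$ around the stated quarter-circle contour is zero by Cauchy's theorem (the only singularity $z=2$ lies outside), which upon parameterizing the four sides and letting $\delta\to 0$ yields the displayed complex identity. Matching real and imaginary parts separates this into the two real identities
\begin{align*}
\int_{0}^{\pi/2}\frac{\phi(2\cos\phi-1)}{5-4\sin\phi}\,\mathrm d\phi
&=\int_{0}^{1}\frac{\log x}{2-x}\,\mathrm dx+\int_{0}^{1}\frac{\pi+y\log y}{4+y^{2}}\,\mathrm dy,\\
\int_{0}^{\pi/2}\frac{2\phi\sin\phi}{5-4\sin\phi}\,\mathrm d\phi
&=\int_{0}^{1}\frac{\tfrac{\pi}{2}y-2\log y}{4+y^{2}}\,\mathrm dy,
\end{align*}
which are already listed immediately before the proposition.

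What remains is to substitute the five real-integral evaluations computed just above:
$\int_{0}^{1}\frac{\log x}{2-x}\,\mathrm dx=\tfrac12\log^{2}2-\tfrac12\zeta(2)$,
$\int_{0}^{1}\frac{\pi}{4+y^{2}}\,\mathrm dy=\tfrac{\pi}{2}\arctan\tfrac12$,
$\int_{0}^{1}\frac{y\log y}{4+y^{2}}\,\mathrm dy=\tfrac14 Li_{2}(-\tfrac14)$,
$\int_{0}^{1}\frac{\frac{\pi}{2}y}{4+y^{2}}\,\mathrm dy=\tfrac{\pi}{4}\log\tfrac54$, and
$\int_{0}^{1}\frac{2\log y}{4+y^{2}}\,\mathrm dy=-Ti_{2}(\tfrac12)$. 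For the first proposition identity, adding the first three and using $\zeta(2)=\pi^{2}/6$ to turn $-\tfrac12\zeta(2)$ into $-\tfrac{\pi^{2}}{12}$ produces exactly the right-hand side. For the second identity, subtracting gives $\tfrac{\pi}{4}\log\tfrac54+Ti_{2}(\tfrac12)$ directly.

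Since the contour and all auxiliary integrals are already furnished in the text, the step I would actually write out is the bookkeeping: verify that the signs on the small-arc contribution indeed vanish as $\delta\to 0$ (the factor $\delta e^{i\phi}$ dominates the logarithmic singularity), verify the real/imaginary split, and then sum the five evaluations with the correct signs. The only step I expect to require even mild care is the identity $\int_{0}^{1}\frac{y\log y}{4+y^{2}}\,\mathrm dy=\tfrac14 Li_{2}(-\tfrac14)$, which follows from the substitution $y\mapsto y$ rewriting $y\log y/(4+y^{2})$ and using the series $\log(1+u)=\sum(-1)^{n-1}u^{n}/n$ together with the definition $Li_{2}(x)=\sum x^{n}/n^{2}$; the author's own derivation shows the two telescoping series combine into a single $Li_{2}$ value. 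Everything else is algebra, so the proof reduces to one clean paragraph substituting the five values into the two real identities.
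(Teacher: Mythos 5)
Your proposal is correct and follows the paper's proof exactly: the paper likewise applies Cauchy's theorem to $f(z)=-\log z/(2-z)$ on the quarter-circle contour, separates real and imaginary parts to obtain the two stated real identities, and then substitutes the same five auxiliary integral evaluations (with $\zeta(2)=\pi^{2}/6$) to conclude. Your sign bookkeeping and the series verification of $\int_{0}^{1}\frac{y\log y}{4+y^{2}}\,\mathrm{d}y=\frac{1}{4}Li_{2}\left(-\frac{1}{4}\right)$ match the paper's own computation.
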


\section{Some formulas for harmonic numbers}

In this section, we develop some formulas for harmonic numbers in terms of binomial coefficients. We begin by recalling a known result for harmonic numbers \cite{Sofo1}.

For $n \in \mathbb N_{0}$, the following result holds:
\begin{align*}
-\frac{H_{n+1}}{n+1}=\int_{0}^{1}y^{n}\log y\mathrm{d}y\,.
\end{align*}
We are now going to prove our main result of this section.

\begin{Lem}\label{lem11}
Let $n, m \in \mathbb N_{0}$, defining
\begin{align*}
L(n, m, x):=\int_{0}^{x}y^{n}\log^{m} y \mathrm{d}y\,,
\end{align*}
then we have
\begin{align*}
L(n, m, x)=\frac{x^{n+1}}{n+1}\sum_{j=0}^{m}\frac{(m+1-j)_{j}}{(n+1)^j}(-1)^{j}\log^{m-j} x\,,
\end{align*}
where $(t)_{n}=t(t+1)\cdots(t+n-1)$ is the Pochhammer symbol. In particular, we have $L(n, m, 1)=\frac{m!(-1)^m}{(n+1)^{m+1}}$.
\end{Lem}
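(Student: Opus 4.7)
The plan is to prove the closed form by integration by parts to set up a recurrence in $m$, and then verify the explicit formula by induction.

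First I would check that the integral is well defined: since $n \in \mathbb{N}_0$ we have $n+1 \geq 1$, and $\lim_{y\to 0^+} y^{n+1}\log^m y = 0$, so both the integrand and the boundary terms arising later cause no trouble at $0$. Taking $u = \log^m y$ and $dv = y^n\,dy$, a single integration by parts yields the recurrence
\begin{align*}
L(n,m,x) = \frac{x^{n+1}\log^m x}{n+1} - \frac{m}{n+1}\, L(n,m-1,x),
\end{align*}
with the obvious base case $L(n,0,x) = x^{n+1}/(n+1)$, which matches the claimed formula for $m=0$ (only the $j=0$ term appears, and $(1)_0 = 1$).

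Next I would perform induction on $m$. Substituting the inductive hypothesis for $L(n,m-1,x)$ into the recurrence, shifting the summation index by $j \mapsto j-1$, and absorbing the stand-alone $\frac{x^{n+1}\log^m x}{n+1}$ term as the new $j=0$ contribution, one obtains an expression of the same shape with coefficient $\frac{m\cdot (m-j)_{j-1}}{(n+1)^{j}}$ in front of $\log^{m-j} x$ for $j\geq 1$. The Pochhammer identity
\begin{align*}
m \cdot (m-j+1)_{j-1} = (m-j+1)(m-j+2)\cdots(m-1)\cdot m = (m-j+1)_{j}
\end{align*}
is what is needed to match the stated formula, and this is the only non-routine step of the calculation; combined with $(m+1)_0 = 1$ at $j=0$, it completes the induction.

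Finally, the specialization $x=1$ is immediate: every term in the sum carries a factor $\log^{m-j} 1 = 0$ except $j=m$, leaving $\frac{1}{n+1}\cdot \frac{(1)_m}{(n+1)^m}(-1)^m = \frac{m!(-1)^m}{(n+1)^{m+1}}$ since $(1)_m = m!$. I expect the main obstacle to be purely notational rather than conceptual, namely tracking the sign $(-1)^j$, the factor $m$ from differentiating $\log^m y$, and the index shift so that the Pochhammer coefficients align correctly.
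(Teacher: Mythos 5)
Your proposal is correct and takes essentially the same route as the paper: the paper also integrates by parts to obtain the recurrence $L(n,m,x)=\frac{x^{n+1}}{n+1}\log^{m}x-\frac{m}{n+1}L(n,m-1,x)$ and then unfolds it to the closed form, where you make the unfolding rigorous by an explicit induction with the Pochhammer identity $m\,(m-j+1)_{j-1}=(m-j+1)_{j}$. Only a typographical slip: the in-text coefficient should be $m\,(m-j+1)_{j-1}/(n+1)^{j}$ rather than $m\,(m-j)_{j-1}/(n+1)^{j}$, consistent with your displayed identity.
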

\begin{proof}
\begin{align*}
L(n, m, x)&=\frac{x^{n+1}}{n+1}\log^{m} x-\frac{m}{n+1}\int_{0}^{x}y^{n}\log^{m-1} y \mathrm{d}y\\
&=\frac{x^{n+1}}{n+1}\log^{m} x-\frac{m}{n+1}L(n, m-1, x)\\
&=\frac{x^{n+1}}{n+1}\log^{m} x-\frac{m x^{n+1}}{(n+1)^2}\log^{m-1} x+\frac{m(m-1)}{(n+1)^2}L(n, m-2, x)\\
&=\frac{x^{n+1}}{n+1}\sum_{j=0}^{m}\frac{(m+1-j)_{j}}{(n+1)^j}(-1)^{j}\log^{m-j} x\,.
\end{align*}
\end{proof}

\begin{Lem}\label{lem12}
Let $n, m \in \mathbb N_{0}$, defining
\begin{align*}
M(n, m, x):=\int_{x}^{1}y^{n}\log^{m} (1-y) \mathrm{d}y\,,
\end{align*}
then we have
\begin{align*}
M(n, m, x)=\sum_{j=0}^{n}\binom{n}{j}(-1)^{j}\frac{(1-x)^{j+1}}{j+1}
\sum_{i=0}^{m}\frac{(m+1-i)_{i}}{(j+1)^i}(-1)^{i}\log^{m-i} (1-x)\,.
\end{align*}
In particular, we have $M(n, m, 0)=(-1)^{m}m!\sum_{j=0}^{n}\binom{n}{j}\frac{(-1)^{j}}{(j+1)^{m+1}}$.
\end{Lem}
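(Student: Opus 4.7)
The plan is to reduce $M(n,m,x)$ to an expression that Lemma~\ref{lem11} can evaluate directly. The natural move, given the presence of $\log^m(1-y)$, is the substitution $u = 1-y$, $\mathrm{d}u = -\mathrm{d}y$. Under this change of variable the limits $y=x$ and $y=1$ become $u=1-x$ and $u=0$ respectively, and the integrand $y^n \log^m(1-y)$ transforms to $(1-u)^n \log^m u$. After flipping the orientation of integration, I obtain
\begin{align*}
M(n,m,x) = \int_{0}^{1-x} (1-u)^{n} \log^{m} u \, \mathrm{d}u.
\end{align*}

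Next I would expand $(1-u)^n$ by the binomial theorem as $\sum_{j=0}^{n}\binom{n}{j}(-1)^{j}u^{j}$ and exchange the finite sum with the integral. This immediately rewrites the integral as a linear combination of the quantities $L(j,m,1-x)$ defined in Lemma~\ref{lem11}:
\begin{align*}
M(n,m,x) = \sum_{j=0}^{n}\binom{n}{j}(-1)^{j} L(j, m, 1-x).
\end{align*}
Substituting the closed form for $L(j,m,1-x)$ from Lemma~\ref{lem11} yields exactly the stated expression.

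For the specialization at $x=0$, I would just invoke the second (particular) assertion of Lemma~\ref{lem11}, namely $L(j,m,1)=m!(-1)^{m}/(j+1)^{m+1}$, and pull the constants $(-1)^{m} m!$ outside the sum. There is no real obstacle here: the only thing to be careful about is the orientation change in the substitution (the sign from $\mathrm{d}u = -\mathrm{d}y$ cancels the sign produced by swapping the limits), and the bookkeeping of the two nested sums in Lemma~\ref{lem11}, but both are routine. The entire argument is essentially a one-line change of variable plus binomial expansion, so the proof should be very short.
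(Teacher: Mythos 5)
Your proposal is correct and follows exactly the paper's own argument: the substitution $t=1-y$ to get $\int_{0}^{1-x}(1-t)^{n}\log^{m}t\,\mathrm{d}t$, binomial expansion of $(1-t)^{n}$, and then invoking Lemma~\ref{lem11} termwise, with the special case $x=0$ following from $L(j,m,1)=m!(-1)^{m}/(j+1)^{m+1}$. No gaps; your attention to the sign cancellation in the orientation flip is the only subtlety, and you handled it correctly.
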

\begin{proof}
By a change of variable, we have
\begin{align*}
M(n, m, x)&=\int_{0}^{1-x}(1-t)^{n}\log^{m} t \mathrm{d}t\\
&=\sum_{j=0}^{n}\binom{n}{j}(-1)^{j}\int_{0}^{1-x}t^{j}\log^{m} t \mathrm{d}t\,.
\end{align*}
With the help of Lemma\ref{lem11}, we get the desired result.
\end{proof}
Note that, $-\frac{H_{n+1}}{n+1}=M(n, 1, 0)$, then we have the following proposition:
\begin{Prop}\label{prop2}
For $n\in \mathbb N_{0}$ and $r \in \mathbb N$, we have
\begin{align*}
H_{n+1}&=(n+1)\sum_{j=0}^{n}\binom{n}{j}\frac{(-1)^{j}}{(j+1)^{2}}\\
&=\sum_{j=0}^{n}\binom{n+1}{j+1}\frac{(-1)^{j}}{j+1}\,,\\
h_n^{(r)}&=\binom{n+r-1}{r-1}\bigg(\sum_{j_{1}=0}^{n+r-2}\binom{n+r-1}{j_{1}+1}\frac{(-1)^{j_{1}}}{j_{1}+1}
-\sum_{j_{2}=0}^{r-2}\binom{r-1}{j_{2}+1}\frac{(-1)^{j_{2}}}{j_{2}+1}\bigg)\,.
\end{align*}
\end{Prop}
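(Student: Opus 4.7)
The plan is to apply Lemma \ref{lem12} in the special case $m=1$ and combine it with the cited identity $-\frac{H_{n+1}}{n+1}=M(n,1,0)$ to obtain the first formula, then leverage a well-known closed form for classical hyperharmonic numbers to bootstrap from $H_{n}$ up to $h_n^{(r)}$.

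First I would specialize Lemma \ref{lem12} at $m=1$, which yields
\begin{align*}
M(n,1,0)=-\sum_{j=0}^{n}\binom{n}{j}\frac{(-1)^{j}}{(j+1)^{2}}\,.
\end{align*}
Equating this with $-H_{n+1}/(n+1)$ and multiplying through by $-(n+1)$ gives the first equality in the proposition. For the second equality I would use the absorption identity
\begin{align*}
(n+1)\binom{n}{j}\frac{1}{j+1}=\binom{n+1}{j+1}\,,
\end{align*}
which immediately rewrites $(n+1)\binom{n}{j}/(j+1)^{2}$ as $\binom{n+1}{j+1}/(j+1)$, producing the stated alternative form of $H_{n+1}$.

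For the formula for $h_n^{(r)}$, the key ingredient is the classical Conway--Guy closed form
\begin{align*}
h_{n}^{(r)}=\binom{n+r-1}{r-1}\bigl(H_{n+r-1}-H_{r-1}\bigr)\,,
\end{align*}
which is already referenced in the paper through \cite{Conway,Benjamin}. I would then apply the binomial expression for the classical harmonic number (just established) to $H_{n+r-1}$ (shift index: set $n+1\mapsto n+r-1$, so the summation variable ranges over $j_{1}=0,\dots,n+r-2$) and separately to $H_{r-1}$ (summation variable $j_{2}=0,\dots,r-2$), substitute both into the Conway--Guy formula, and collect terms.

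I do not anticipate a serious obstacle: Lemma \ref{lem12} does all the analytic work, and the remainder is bookkeeping together with the Conway--Guy identity. The only mildly delicate point is keeping the two index ranges $j_{1}$ and $j_{2}$ correctly aligned with the shifted binomial coefficients $\binom{n+r-1}{j_{1}+1}$ and $\binom{r-1}{j_{2}+1}$, so that the final expression matches the statement exactly; this is a matter of careful substitution rather than genuine difficulty.
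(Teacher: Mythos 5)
Your proposal is correct and follows essentially the same route the paper intends: specializing Lemma \ref{lem12} at $m=1$ and equating with $-H_{n+1}/(n+1)=M(n,1,0)$, rewriting via the absorption identity $\binom{n+1}{j+1}=\frac{n+1}{j+1}\binom{n}{j}$, and then substituting the binomial expression for $H_{n+r-1}$ and $H_{r-1}$ into the classical closed form $h_n^{(r)}=\binom{n+r-1}{r-1}\bigl(H_{n+r-1}-H_{r-1}\bigr)$. The paper leaves precisely these steps implicit (it only remarks $-\frac{H_{n+1}}{n+1}=M(n,1,0)$ before stating the proposition), and your index shifts $j_{1}=0,\dots,n+r-2$ and $j_{2}=0,\dots,r-2$ are exactly right.
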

The following formulas are known \cite{Devoto}:
\begin{align*}
&M(n, 2, 0)=\frac{2}{n+1}\bigg(H_{n+1}^{(2)}+\sum_{k=1}^{n}\frac{H_{k}}{k+1}\bigg)\,,\\
&M(n, 3, 0)=-\frac{6}{n+1}\bigg(H_{n+1}^{(3)}+\sum_{j=1}^{n}\frac{H_{j}}{(j+1)^2}
+\sum_{j=1}^{n}\frac{H_{j}^{(2)}}{j+1}+\sum_{k=1}^{n}\frac{1}{k+1}\sum_{j=1}^{k-1}\frac{H_{j}}{j+1}\bigg)\,.
\end{align*}
With the help of Proposition \ref{prop2}, we have the following proposition:
\begin{Prop}
For $n\in \mathbb N_{0}$, we have
\begin{align*}
&H_{n+1}^{(2)}=\sum_{j=0}^{n}\binom{n+1}{j+1}\frac{(-1)^{j}}{(j+1)^2}
-\sum_{k=0}^{n-1}\frac{1}{k+2}\sum_{j=0}^{k}\binom{k+1}{j+1}\frac{(-1)^{j}}{j+1}\,,\\
&H_{n+1}^{(3)}=\sum_{j=0}^{n}\binom{n+1}{j+1}\frac{(-1)^{j}}{(j+1)^3}
-\sum_{j=0}^{n-1}\frac{1}{(j+1)^2}\sum_{\ell=0}^{j}\binom{j+1}{\ell+1}\frac{(-1)^{\ell}}{\ell+1}\\
&\qquad \qquad -\sum_{j=0}^{n-1}\frac{1}{j+2}\sum_{\ell=0}^{j}\binom{j+1}{\ell+1}\frac{(-1)^{\ell}}{(\ell+1)^2}\,.
\end{align*}
\end{Prop}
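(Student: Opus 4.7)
The plan is to combine Lemma \ref{lem12} (specialized to $x=0$) with the two Devoto-type identities for $M(n,2,0)$ and $M(n,3,0)$ displayed just before the proposition, and then use Proposition \ref{prop2} to eliminate the remaining harmonic numbers in favor of alternating binomial sums.

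For the $H_{n+1}^{(2)}$ formula, first I would apply Lemma \ref{lem12} at $m=2$ to rewrite
$M(n,2,0)=2\sum_{j=0}^{n}\binom{n}{j}(-1)^{j}/(j+1)^{3}$.
Equating this with Devoto's expression and multiplying by $(n+1)/2$, I use the identity $\binom{n+1}{j+1}=\tfrac{n+1}{j+1}\binom{n}{j}$ to rewrite
$(n+1)\sum_{j=0}^{n}\binom{n}{j}(-1)^{j}/(j+1)^{3}=\sum_{j=0}^{n}\binom{n+1}{j+1}(-1)^{j}/(j+1)^{2}$,
which yields $H_{n+1}^{(2)}=\sum_{j=0}^{n}\binom{n+1}{j+1}(-1)^{j}/(j+1)^{2}-\sum_{k=1}^{n}H_{k}/(k+1)$. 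Finally, substituting Proposition \ref{prop2} for $H_{k}$ (with $n+1$ replaced by $k$) and reindexing $k\mapsto j+1$ converts $\sum_{k=1}^{n}H_{k}/(k+1)$ into the double binomial sum appearing in the statement.

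For the $H_{n+1}^{(3)}$ formula I would repeat the procedure with $m=3$: Lemma \ref{lem12} gives $M(n,3,0)=-6\sum_{j=0}^{n}\binom{n}{j}(-1)^{j}/(j+1)^{4}$; equating this with Devoto's formula and dividing by $-6/(n+1)$ leaves
$\sum_{j=0}^{n}\binom{n+1}{j+1}(-1)^{j}/(j+1)^{3}
=H_{n+1}^{(3)}+T_{1}+T_{2}+T_{3}$,
where $T_{1}=\sum_{j=1}^{n}H_{j}/(j+1)^{2}$, $T_{2}=\sum_{j=1}^{n}H_{j}^{(2)}/(j+1)$ and $T_{3}=\sum_{k=1}^{n}\frac{1}{k+1}\sum_{j=1}^{k-1}H_{j}/(j+1)$. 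The first term $T_{1}$ is transformed directly by Proposition \ref{prop2}. For $T_{2}$ I would insert the $H_{j}^{(2)}$ formula that was just established in the first half of this proposition; this splits $T_{2}$ into a single ``clean'' binomial double-sum plus an unwanted triple-sum $-\widetilde{T}$. The final step is to check that $\widetilde{T}=T_{3}$ after substituting Proposition \ref{prop2} for $H_{j}$ inside $T_{3}$ and applying a matching reindexing ($j\mapsto j'+1$), so that $T_{2}+T_{3}$ collapses to just the clean double sum. Solving for $H_{n+1}^{(3)}$ then gives the displayed identity.

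The main obstacle I foresee is exactly this cancellation of the two triple-sums in the $H_{n+1}^{(3)}$ case. Both $\widetilde{T}$ (coming from the $H_{j}^{(2)}$ expansion) and $T_{3}$ (from Devoto) are triple sums of the form $\sum\frac{1}{K+1}\sum\frac{1}{j+2}\sum\binom{j+1}{\ell+1}\frac{(-1)^{\ell}}{\ell+1}$, and showing they agree requires careful bookkeeping of the inner and outer summation ranges after relabeling the dummy indices. Once this identification is made, everything else is routine algebraic rearrangement.
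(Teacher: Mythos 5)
Your proposal is correct and is essentially the paper's own (implicit) derivation: Lemma \ref{lem12} at $x=0$ combined with the Devoto--Duke evaluations of $M(n,2,0)$ and $M(n,3,0)$, then Proposition \ref{prop2} (and, for the cubic case, the freshly proved $H^{(2)}$ formula) to eliminate the harmonic numbers, with the two triple sums $\widetilde{T}$ and $T_3$ cancelling exactly as you predict after the relabeling $j\leftrightarrow k$. One remark worth recording: carrying out your transformation of $T_1=\sum_{j=1}^{n}H_j/(j+1)^2$ via Proposition \ref{prop2} yields $\sum_{j=0}^{n-1}\frac{1}{(j+2)^2}\sum_{\ell=0}^{j}\binom{j+1}{\ell+1}\frac{(-1)^{\ell}}{\ell+1}$, so the factor $\frac{1}{(j+1)^2}$ in the printed second sum of the $H_{n+1}^{(3)}$ identity is a typo for $\frac{1}{(j+2)^2}$ (at $n=1$ the printed right-hand side gives $15/8-1-1/2=3/8$ while $H_2^{(3)}=9/8$, whereas the corrected version gives $15/8-1/4-1/2=9/8$); your argument, carried out faithfully, proves the corrected statement.
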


\end{document}